\documentclass[11pt]{article}
\usepackage{amsmath,amssymb,amsthm}
\usepackage[margin=1.2in]{geometry}
\usepackage[hidelinks]{hyperref}

\newtheorem{theorem}{Theorem}
\newtheorem{lemma}[theorem]{Lemma}
\newtheorem{proposition}[theorem]{Proposition}
\newtheorem{corollary}[theorem]{Corollary}
\theoremstyle{definition}
\newtheorem{definition}[theorem]{Definition}
\newtheorem{remark}[theorem]{Remark}

\newcommand{\Z}{\mathbb{Z}}
\newcommand{\N}{\mathbb{N}}
\newcommand{\SA}{S_{\mathrm{A}}}
\newcommand{\SB}{S_{\mathrm{B}}}
\newcommand{\pos}{\mathrm{pos}}
\newcommand{\OG}{\mathrm{OG}}

\title{Structural rigidity in the Erd\H{o}s--Graham two-set permutation problem}
\author{William Kasel}
\date{August 2026}

\begin{document}
\maketitle

\begin{abstract}
Call a permutation of a set of positive integers \emph{admissible} if it
contains no increasing or decreasing three-term arithmetic progression in
position order. Davis, Entringer, Graham, and Simmons proved that the
positive integers are not admissibly permutable, partitioned them into
three admissibly permutable sets, and asked whether two sets suffice. We
study the canonical dyadic candidate
$\SA = \bigcup_{k \ge 2,\ k \text{ even}} (2^{k-1}, 2^{k}]$. We establish
several structural restrictions on admissible permutations, including an
orbit obstruction, an asymmetric balance law, and the impossibility of
placing all sufficiently large dyadic blocks as contiguous runs. For the
main result, we reduce admissibility of $\SA$ to feasibility of an order
gadget on $(M, 2M]$: the order must avoid monotone three-term progressions
and satisfy fifteen precedence constraints induced by the values $15$ and
$16$. We then prove that a three-constraint core of this gadget is
inconsistent for every $M \equiv 0 \pmod 8$, using zigzag propagation on
arithmetic-progression ladders, a transfer lock, and mirror-flood
induction. Since every relevant dyadic scale lies in this residue class,
$\SA$ admits no admissible permutation. Thus the canonical dyadic partition
does not solve the two-set problem, which remains open. Independent SAT
encodings and certificate checks provide auxiliary verification of the
human-readable proof.
\end{abstract}

\section{Introduction}

Call a linear arrangement of a set of integers \emph{admissible} if no three
of its terms, read in order of position, form an increasing or a decreasing
arithmetic progression (a \emph{monotone 3-AP}). Every finite set of integers
admits an admissible arrangement: place the odd values before the even values
and recurse, using the fact that the first and third terms of a 3-AP share a
parity (stated for intervals in \cite{DEGS77}, which credits earlier notes of
this observation; for arbitrary finite sets in \cite{ErGr79}). For infinite sets the situation changes completely.
Davis, Entringer, Graham and Simmons \cite{DEGS77} proved that every
permutation of $\Z^+$ (of order type $\omega$) contains an increasing 3-AP ---
in particular a monotone one --- so $\Z^+$ itself admits no admissible
permutation. They also constructed a partition of $\Z^+$ into \emph{three}
sets, each admitting an admissible permutation, and asked (Concluding
Remarks~3, p.~89) whether two sets suffice. Erd\H{o}s and Graham repeated the
question in \cite{ErGr79} (p.~338), calling it ``a very annoying question''
(see also their monograph \cite{ErGr80}); it is problem \#197 of the
Erd\H{o}s problem collection \cite{Bl}, where it is stated for the natural
numbers and carries a Lean formalization \cite{FC} (see
Remark~\ref{rem:lean} for the $0 \in \N_0$ nuance).

The modern literature around the problem concerns densities, longer
progressions, and enumeration (Ho \cite{Ho26} recently proved that
$\lim_n \theta(n)^{1/n}$ does not exist, where $\theta(n)$ counts the
$3$-AP-free permutations of $[n]$, answering a question of Landman and
Robertson). LeSaulnier and Vijay \cite{LV11} exhibited a set admitting an
admissible permutation with upper density $1/2$ and lower density $1/4$;
writing $\alpha_{\N}(3)$ (resp.\ $\beta_{\N}(3)$) for the supremum of upper
(resp.\ lower) densities of such sets, this gives $\alpha_{\N}(3) \ge 1/2$
and $\beta_{\N}(3) \ge 1/4$, and they conjectured both bounds sharp.
Geneson \cite{Gen26} recently disproved the upper-density conjecture, proving
$\alpha_{\N}(3) \ge 2/3$. His witness is strikingly relevant here: it is a
union of ratio-$4$ doubling blocks $[L\cdot 4^j + m,\, 2L\cdot 4^j]$ --- an
$\SA$-like octave pattern \emph{with the bottom sliver of each block
removed} (here $m = M_{k-1}$ in his notation, re-cased to avoid a collision
with our scale parameter $M$) and only finitely many octaves per stage. Our
main theorem shows the full dyadic blocks fail, and the mechanism of failure
--- the attack of the values $15$ and $16$ on every block's bottom sliver
(Sections~\ref{sec:og}--\ref{sec:c3}) --- is concentrated in exactly the
region Geneson's construction excises, so the two results are consistent
and their mechanisms align. (We do not claim that
removing precisely those slivers is necessary or sufficient; his
construction also uses stage separation and rapid scale growth.) Permutations of $\Z$ and of subsets of $\Z^+$
avoiding longer monotone progressions were constructed by Adenwalla
\cite{Ad24} and Geneson \cite{Gen18}. Hirose and Saito \cite{HS24} ---
continuing a line begun by Ardal, Brown and Jungi\'c \cite{ABJ11} on
AP-avoiding (``chaotic'') orderings of $\mathbb{Q}$ and $\mathbb{R}$ ---
characterized the \emph{order types} of total orders on $\N$, $\Z$ and $\mathbb{Q}$
admitting no monotone 3-AP; from their standpoint the difficulty of \#197 is
that order type $\omega$ is forced, where avoidance is hardest.

A natural first candidate for a two-set partition is the \emph{dyadic
partition}: let
\[
\SA \;=\; \bigcup_{k \text{ even},\, k \ge 2} \left(2^{k-1},\, 2^{k}\right]
\;=\; (2,4] \cup (8,16] \cup (32,64] \cup \cdots,
\]
\[
\SB{} \;=\; \Z^+ \setminus \SA \;=\; \{1,2\} \cup (4,8] \cup (16,32] \cup \cdots,
\]
so that each team is a union of doubling blocks (together, for $\SB{}$, with
the two initial values) and the completion
$2y - x$ of a within-block pair $x < y$, whenever it escapes the block
upward, lands in the adjacent block --- the other team's territory. Two
elementary density facts explain why this is the canonical candidate. First,
$\SA$ has upper density $2/3$ (attained along $n = 4^k$) and lower density
$1/3$ (along $n = 2\cdot 4^k$), and $\SB{}$ the same with the roles of the two
subsequences exchanged. So $\SA$ sits exactly at the current record value
$\alpha_{\N}(3) \ge 2/3$ of \cite{Gen26}, and the main theorem below
therefore exhibits a specific set \emph{at} that density which is not
permutable --- the record is not improved by taking full dyadic blocks.
Second, for any partition $\Z^+ = A \sqcup B$ the identity
$\overline{d}(A) + \underline{d}(B) = 1$ holds automatically, so a two-set
solution requires $\alpha_{\N}(3) + \beta_{\N}(3) \ge 1$ (a remark of
\cite{LV11}, who conjectured the sum to be $3/4 < 1$); the dyadic pair meets
this constraint exactly, $2/3 + 1/3 = 1$, while the best bounds currently
known give only $\alpha_{\N}(3) + \beta_{\N}(3) \ge 2/3 + 1/4 = 11/12$, so
the density obstruction is undecided. Our main result eliminates this
candidate on other grounds.

\begin{quote}
\textbf{Main Theorem} (Theorem~\ref{thm:main}). \emph{$\SA$ admits no
permutation of order type $\omega$ free of monotone $3$-term arithmetic
progressions. Consequently the dyadic partition does not witness a positive
answer to problem \#197.}
\end{quote}

The proof chain is short and we state it first, since the paper contains a
good deal of surrounding material that is not part of it. It has exactly
two links. The first is a reduction (Theorem~\ref{thm:ogred}): an
\emph{order gadget} $\OG(M)$ on the interval $(M, 2M]$ --- AP-freeness plus
fifteen precedence axioms induced by the two values $15$ and $16$ --- whose
infeasibility at infinitely many dyadic scales implies the main theorem
unconditionally. It is proved directly on a hypothetical permutation, in
three paragraphs, and uses nothing else in the paper.
For the second --- and this is the mathematical heart --- we
prove by hand that already a three-axiom core of $\OG(M)$ is inconsistent
with admissibility for every $M \equiv 0 \pmod 8$ (Theorem~\ref{thm:c3core}),
via a toolkit of ladder lemmas whose single appeal to the residue of $M$
occurs at two ``flood centres'' near $3M/2$. (The basic zigzag alternation
on AP-ladders that seeds the toolkit is classical --- it appears inside
\cite{DEGS77} in Folkman's argument, as Lemma~2.5 of \cite{HS24}, and as
Lemmas~2.1--2.2 of \cite{Gen26}; what is new here, to our knowledge, is its
boundary-quantitative
use on bounded ladders, the transfer lock, and the flood induction.) Computationally, the mod-$8$ condition is sharp:
the same three axioms are satisfiable at every tested scale with
$M \not\equiv 0 \pmod 8$ (a parametric satisfying construction for the
complementary residues is not proved here, and the main theorem does not
require one),
matching the machine's phase diagram exactly.

The paper is organized so that this chain stands alone.
Section~\ref{sec:general} collects four obstructions that constrain
admissible permutations of any set --- an orbit obstruction, an asymmetric
balance law, a ratio-ascent lemma and a partition-rigidity lemma --- and
Section~\ref{sec:contig} proves that no admissible permutation of $\SA$ can
play all sufficiently large dyadic blocks as contiguous runs. Neither is
used later; both are included because they are the general facts we know
about the problem, and the last of them motivates the dyadic candidate.
Sections~\ref{sec:og} and~\ref{sec:c3} are the proof.
Section~\ref{sec:data} separates the three roles machine work plays here
and records what is and is not certified. Two appendices hold the rest:
Appendix~\ref{sec:chunk} develops the \emph{chunk calculus}, an exact
reformulation of admissibility in terms of stage functions, which was the
coordinate system of the search and which gives a second route to the
reduction (Remark~\ref{rem:ogchunk}); and
Appendix~\ref{sec:compobs} records the finite-scale computations and the
conjectures they suggest. Nothing in either appendix is used in the proof
of Theorem~\ref{thm:main}.

A word on methodology. The elimination phase and the discovery of the
gadget, the three-axiom core, and the residue phenomenon were carried out
with large-scale SAT-solver experimentation (CaDiCaL, OR-Tools). The
load-bearing negative claims --- the infeasibility of the order gadget and
of its three-axiom core --- were cross-validated by independently written
encodings and by more than one solver; the auxiliary sweeps reported in
Appendix~\ref{sec:compobs} were not, and are flagged
where they occur. The
final proof chain, however, does not rest on any solver verdict: the
reduction and the C3 core theorem are human-readable arguments. They are
$\forall M$ schemas, with infinitely many instances; a finite but large
range of those instances (Remark~\ref{rem:c3machine}) has been
machine-audited step by step by three
independently written checkers, including mutation testing of the checkers
themselves --- an audit of the writing, not a substitute for the proofs.
A reproduction kit (scripts, pinned environments, and
DRAT certificates for two base instances) accompanies the paper.

Problem \#197 itself remains open. Beyond eliminating the leading candidate,
the methods here --- the chunk calculus, the attack/guard analysis of small
values against doubling blocks, and the residue-locked ladder toolkit ---
should apply to any two-set partition whose parts are unions of intervals,
and we expect them to bear on the general question.

\section{Definitions and conventions}

\begin{definition}
A set $S \subseteq \Z^+$ is \emph{permutable} if there is a bijection
$\pi\colon \N \to S$ such that no $i<j<k$ have $\pi(i),\pi(j),\pi(k)$ forming an
arithmetic progression with $\pi(i)<\pi(j)<\pi(k)$ or $\pi(i)>\pi(j)>\pi(k)$.
When the progression length matters we write \emph{$3$-permutable}; in this
paper the two words are synonymous, since only $3$-term progressions are
forbidden. Note that a permutable set is by definition countably infinite
and the permutation has order type $\omega$.
\end{definition}

Given a permutation, or any linear order, of a set of values we write
$u \prec v$ to mean that $u$ is placed before $v$.

The completion of an ordered pair: if $x$ is placed before $y$, the value
$z = 2y - x$ (which continues the progression through $x,y$ in the direction of
$y$) must not be placed after $y$; equivalently every such $z \in S$ placed later
yields a monotone 3-AP.

Throughout, $\N = \{1, 2, 3, \dots\}$ is the set of positive integers, which
serves both as the ground set (written $\Z^+$ when we wish to emphasise that
role) and as the index set of positions in a permutation.

\begin{definition}[Dyadic indexing]\label{def:dyadic}
For $k \ge 1$ the \emph{$k$-th dyadic block} is
\[
B_k \;=\; \left(2^{k-1},\, 2^{k}\right] \cap \Z^+
\;=\; \{\, 2^{k-1}+1,\ 2^{k-1}+2,\ \dots,\ 2^{k} \,\},
\qquad |B_k| = 2^{k-1}.
\]
The blocks $B_1, B_2, B_3, \dots$ are pairwise disjoint and partition
$\Z^+ \setminus \{1\}$; for $v \ge 2$ we write $\mathrm{block}(v)$ for the
unique $k \ge 1$ with $v \in B_k$. The canonical two-set candidate is
\[
\SA \;=\; \bigcup_{k \ge 2,\ k \text{ even}} B_k
\;=\; (2,4] \cup (8,16] \cup (32,64] \cup \cdots,
\qquad
\SB{} \;=\; \Z^+ \setminus \SA .
\]
\end{definition}

\noindent The range $k \ge 2$ in the union is not a restriction, since
$k = 1$ is odd; we nevertheless state it, because two counts used later
depend on knowing exactly which small values belong to $\SA$: $1 \notin \SA$
and $2 \notin \SA$, so $\SA \cap [1,16] = \{3,4\} \cup \{9, 10, \dots, 16\}$
has exactly ten elements, and $|\SA \cap [1,64]| = 42$. Note that
$1 \in \SB{}$ lies in no block and $\mathrm{block}(1)$ is undefined; we apply
$\mathrm{block}$ only to elements of $\SA$, all of which are $\ge 3$.

\begin{remark}[Ground set: $\Z^+$ versus $\N_0 \ni 0$; the Lean formalization]
\label{rem:lean}
The original sources pose the problem for the positive integers: Davis,
Entringer, Graham and Simmons ask whether a two-set partition of $\Z^+$ is
possible, and Erd\H{o}s--Graham (1979, p.~338) repeat the question, again
for $\Z^+$. The Lean formalization in the DeepMind
\emph{formal-conjectures} repository
(\texttt{FormalConjectures/ErdosProblems/197.lean}) instead partitions
Lean's ground set $\N_0 = \{0, 1, 2, \dots\}$, which \emph{includes $0$}
(and so differs from our $\N = \{1,2,\dots\}$), and asks for
bijections $f\colon \N_0 \simeq A$, $g\colon \N_0 \simeq B$ avoiding monotone
$3$-term APs --- the bijection-from-$\N_0$ requirement being exactly our
order-type-$\omega$ convention. The two formulations are not verbatim
equivalent: a positive answer over $\N_0$ yields one over $\Z^+$ (delete the
value $0$; deletion preserves $3$-AP-freeness), but conversely adjoining
$0$ to one part adds genuine constraints --- the progressions $(0, d, 2d)$
with $d, 2d$ in that part --- and we are not aware of a general argument
that a permutable set stays permutable after adjoining $0$. Our results
are insensitive to the nuance: Theorem~\ref{thm:main} shows $\SA$ itself
is not permutable, and since deletion preserves validity, $\SA \cup \{0\}$
is not permutable either; hence the dyadic partition fails in both
formulations, whichever part receives $0$. (In the Lean encoding the
values live in $\N_0$, so the common difference is a natural number and never
negative; \texttt{HasMonotoneAP} requires strictly increasing indices whose
value list equals $(a, a+d, a+2d)$ or its reverse, the reversed disjunct
being the decreasing case. The predicate permits $d = 0$, but injectivity
of the permutation rules that out, so the notion of ``monotone $3$-AP''
agrees with ours.)
\end{remark}

\section{General obstructions}\label{sec:general}

This section collects four restrictions that every admissible permutation
of every set must obey. They are independent of the dyadic partition.
The first, the orbit obstruction, is used below in Lemma~\ref{lem:ray};
none of the four results is used after this section or in the proof of
Theorem~\ref{thm:main}.

\subsection{The orbit obstruction}

\begin{lemma}[Orbit obstruction]\label{lem:orbit}
Let $S$ be permutable and $F \subseteq S$ finite. There is no infinite sequence
$u_0 < u_1 < \cdots$ in $S$ with $u_{k+1} = 2u_k - f_k$, $f_k \in F$.
\end{lemma}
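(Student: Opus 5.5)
The plan is a direct contradiction argument that uses only the order type $\omega$ of the permutation and the finiteness of $F$. The key observation is that each step of the recursion is itself a $3$-AP. Writing $u_{k+1} = 2u_k - f_k$, the three values $f_k,\, u_k,\, u_{k+1}$ form an arithmetic progression, because $u_k - f_k = u_{k+1} - u_k$. Since the sequence is strictly increasing this common difference is positive, so $f_k < u_k < u_{k+1}$ with all three values in $S$ (recall $F \subseteq S$). The progression is therefore nondegenerate and increasing in value.

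Suppose for contradiction that such a sequence exists, and fix an admissible bijection $\pi\colon \N \to S$. Because $F$ is finite, its elements occupy finitely many positions; let $P$ be the largest of them. The values $u_0 < u_1 < \cdots$ are pairwise distinct, and only finitely many elements of $S$ sit at positions $\le P$. Hence there is an index $K$ such that every $u_k$ with $k \ge K$ is placed after position $P$. For such $k$ we then have $f_k \prec u_k$ and $f_k \prec u_{k+1}$.

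Now fix $k \ge K$ and compare the positions of $u_k$ and $u_{k+1}$. If $u_k \prec u_{k+1}$, then $f_k \prec u_k \prec u_{k+1}$ is an increasing $3$-AP in position order, which contradicts admissibility. Hence $u_{k+1} \prec u_k$ for every $k \ge K$. This gives an infinite strictly decreasing sequence of positions $\pi^{-1}(u_K) > \pi^{-1}(u_{K+1}) > \cdots$ in $\N$, which is impossible. This completes the contradiction.

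No step is a real obstacle. The only point requiring care is the choice of $K$, which must guarantee that the finitely many "anchors" $f_k$ all precede the tail of the orbit. That is exactly where finiteness of $F$ and the order type $\omega$ (each position has only finitely many predecessors) enter. The decreasing-AP half of admissibility is not needed.
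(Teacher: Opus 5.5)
Your proof is correct and follows the paper's argument essentially verbatim: you choose $K$ so that the tail of the orbit is placed after every element of $F$, observe that $(f_k, u_k, u_{k+1})$ is then an increasingly placed AP that forces $u_{k+1} \prec u_k$, and conclude with an infinite strictly decreasing sequence of positions. Your derivation of $f_k < u_k$ directly from $u_{k+1} - u_k = u_k - f_k > 0$ is slightly cleaner than the paper's separate appeal to $u_k \to \infty$ to get $u_k > \max F$, since that extra condition is redundant.
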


\begin{proof}
Let $q$ be the largest position of an element of $F$ in the permutation.
Since the $u_k$ are distinct, only finitely many occupy positions at most
$q$; and since $u_k \to \infty$, eventually $u_k > \max F$. Choose $K$ so
that, for every $k \ge K$, $\pos(u_k) > q$ and $u_k > \max F$. Then
$f_k < u_k$ and $\pos(f_k) \le q < \pos(u_k)$, so the pair $(f_k, u_k)$ is
increasingly placed, and its completion $2u_k - f_k = u_{k+1}$ lies in $S$;
were it placed after $u_k$, the triple $(f_k, u_k, u_{k+1})$ would be an
increasing $3$-AP. Hence $\pos(u_{k+1}) < \pos(u_k)$ for every $k \ge K$,
an infinite strictly decreasing sequence of positions --- impossible.
\end{proof}

Taking $S = \Z^+$, $F = \{1,2\}$ recovers the theorem of Davis--Entringer--Graham--Simmons (more precisely its monotone form; \cite{DEGS77} in fact proves that an \emph{increasing} 3-AP always occurs).

\subsection{The balance law}

\begin{lemma}[Balanced placement]\label{lem:balance}
Let $S$ be permutable via $\pi$, and consider the moment a value $v$ is placed.
Let $L$ (resp.\ $H$) be the number of already-placed values in $[1,v)$ (resp.\
$(v,2v)$). Then
\[
L - H \;\le\; |S^c \cap (v, 2v)|
\qquad\text{and}\qquad
H - L \;\le\; |S^c \cap (0, v)|.
\]
(The two directions are bounded by the codensities of $S$ in \emph{different}
ranges; no symmetric single-range bound holds in general.\footnote{We thank
Jesse Geneson for correcting an earlier symmetric misstatement of this
lemma.})
\end{lemma}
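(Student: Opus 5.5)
The plan is to pair each already-placed value with its reflection through $v$ and use the completion rule. Fix the moment $v$ is placed, and write $P$ for the set of values already placed. For $u \in (0,v)$ put $\sigma(u) = 2v - u \in (v,2v)$; this reflection $u \mapsto 2v-u$ is a bijection from $(0,v)$ onto $(v,2v)$. The key observation is that for each $u$, the two values $u$ and $\sigma(u)$ cannot \emph{both} lie in $P$ when they are both in $S$. Indeed, if $u \prec v$ and $2v-u \prec v$, one of them was placed first; say $u$ came before $2v-u$. Then the positions read $u, 2v-u, v$, so the triple $(u,v,2v-u)$ is monotone in value but not in the right order. So I will run the argument with the pair $(u,v)$ instead: $u \prec v$ with $u<v$ forces the completion $2v-u$, if it lies in $S$, to be placed \emph{before} $v$ (otherwise $u,v,2v-u$ is an increasing AP in position order). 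Symmetrically, $w \prec v$ with $w \in (v,2v)$, $w>v$, forces $2v-w\in(0,v)$, if in $S$, to be placed before $v$ (otherwise we get a decreasing AP $w,v,2v-w$).

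Hence the reflection gives an injection from $P \cap [1,v)$ into $(v,2v)$ whose image lies in $P \cup S^c$. More precisely, each $u \in P\cap[1,v)$ has $\sigma(u)$ either in $S^c\cap(v,2v)$ or in $P\cap(v,2v)$. Counting gives $L \le H + |S^c\cap(v,2v)|$, which is the first inequality. Note that the value $u$ at distance $\ge v$ does not arise, since $u\in[1,v)$ gives $2v-u\in(v,2v-1]$.

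The same argument with $\sigma^{-1}$ gives the second inequality. Each $w \in P \cap (v,2v)$ has $2v-w \in (0,v)$, and that value is either in $S^c$ or already placed. This yields $H \le L + |S^c\cap(0,v)|$.

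There is no real obstacle beyond checking the orientation of the forbidden progression in each case. In one case the AP is increasing ($u<v<2v-u$ with $2v-u$ placed last). In the other it is decreasing ($w>v>2v-w$ with $2v-w$ placed last). Both are excluded by admissibility. The asymmetry of the bounds comes precisely from the fact that the non-members of $S$ are counted on the side \emph{opposite} to the values being reflected. This is why no single-range symmetric bound is claimed.
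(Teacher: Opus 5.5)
Your argument is correct and is the paper's proof: the increasingly placed pair $(u,v)$ (resp.\ the decreasingly placed pair $(w,v)$) forces the completion $2v-u$ (resp.\ $2v-w$), when it lies in $S$, to be placed before $v$, and injectivity of the reflection $u \mapsto 2v-u$ gives the two counts. Delete the opening ``key observation'' that $u$ and $2v-u$ cannot both lie in $P$: it is false, since your own argument shows that whenever $u \in P$ and $2v-u \in S$, both are forced into $P$, and nothing later in your proof uses it.
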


\begin{proof}
Each placed $x < v$ forms an increasingly placed pair $(x,v)$ with completion
$2v - x \in (v, 2v)$, distinct for distinct $x$; each completion lying in $S$
must already be placed. This gives $H \ge L - |S^c \cap (v,2v)|$. In the other
direction, each placed $u \in (v, 2v)$ forms a decreasingly placed pair
$(u,v)$ with completion $2v - u \in (0, v)$, distinct for distinct $u$, and
each such completion lying in $S$ must already be placed, whence
$L \ge H - |S^c \cap (0,v)|$.
\end{proof}

\begin{corollary}[Records]\label{cor:records}
When a left-to-right maximum $v$ is placed, at most $|S^c \cap (v,2v)|$
smaller values are already placed (apply the first inequality with $H = 0$).
\end{corollary}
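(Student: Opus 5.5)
The Corollary is the special case $H = 0$ of the first inequality of Lemma~\ref{lem:balance}, so the plan is simply to check that this specialization is legitimate and then read off the bound.

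Fix the permutation $\pi$ and a value $v$ that is a left-to-right maximum, i.e.\ every value placed before $v$ is smaller than $v$. At the moment $v$ is placed, let $L$ and $H$ be as in Lemma~\ref{lem:balance}. Every already-placed value lies in $[1,v)$, because it is smaller than $v$ and all values are positive. Hence $H = 0$, since no placed value lies in $(v,2v)$, and $L$ is exactly the number of smaller values already placed. Substituting into $L - H \le |S^c \cap (v,2v)|$ gives $L \le |S^c \cap (v,2v)|$, which is the claim.

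As a sanity check, the underlying mechanism can be seen directly. Each placed $x < v$ makes $(x,v)$ an increasingly placed pair whose completion $2v-x$ lies in $(v,2v)$. These completions are distinct for distinct $x$. A completion lying in $S$ would have to be placed already, but nothing in $(v,2v)$ has been placed, so it cannot be placed at all without creating an increasing 3-AP. Hence every completion lies in $S^c$, and injectivity of $x \mapsto 2v-x$ bounds the count.

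There is no real obstacle here. The only point needing care is to confirm that the left-to-right-maximum hypothesis forces $H=0$ exactly and not merely approximately, and it does.
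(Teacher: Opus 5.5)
Your proposal is correct and is the same argument as the paper's: a left-to-right maximum forces $H=0$ and makes $L$ exactly the number of smaller values already placed, so the first inequality of Lemma~\ref{lem:balance} gives the bound. Your direct sanity check simply re-derives that inequality's proof in the special case $H=0$.
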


\subsection{Two absorption lemmas}

\begin{lemma}[Ratio-ascent obstruction]\label{lem:R}
Let $S \supseteq \{k, 2k, 3k, 5k\}$. No 3-AP-free arrangement of $S$ places $w$
before $u$ for every pair $u \ge 2w$ (elements of $S$).
\end{lemma}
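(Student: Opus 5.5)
The plan is to derive a contradiction using only the four values $k,2k,3k,5k$. These values contain two 3-term progressions, $(k,2k,3k)$ and $(k,3k,5k)$. I read ``3-AP-free'' here as ``free of monotone 3-APs'', consistent with the paper's conventions; since every progression obtained below will be increasingly placed, the argument works under either reading. So suppose an admissible arrangement of $S$ places $w$ before $u$ whenever $u \ge 2w$.

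First I list which pairs among the four values the hypothesis orders. Since $2k \ge 2k$, $3k \ge 2k$ and $5k \ge 2k$, we get $k \prec 2k$, $k \prec 3k$ and $k \prec 5k$. Since $5k \ge 4k = 2\cdot 2k$, we get $2k \prec 5k$. The pairs $\{2k,3k\}$ and $\{3k,5k\}$ are not ordered by the hypothesis, because $3k < 4k$ and $5k < 6k$. Their order will instead be forced by AP-avoidance.

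Next, $k$ precedes both $2k$ and $3k$. If $2k \prec 3k$, then $(k,2k,3k)$ would be an increasingly placed 3-AP, so we must have $3k \prec 2k$. Likewise $k$ precedes both $3k$ and $5k$. If $3k \prec 5k$, then $(k,3k,5k)$ would be an increasing 3-AP, so we must have $5k \prec 3k$.

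Finally, chaining the forced relations gives $2k \prec 5k \prec 3k \prec 2k$. This is a cycle in a linear order, which is the desired contradiction. There is no real obstacle here. The only step needing care is checking that the ratio hypothesis really does apply to the pair $(2k,5k)$, via $5k \ge 4k$, and that it applies to $(k,2k)$ with equality allowed in $u \ge 2w$. That pair $(2k,5k)$ is exactly what closes the cycle.
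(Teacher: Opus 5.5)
Your proof is correct and is essentially the paper's argument. It uses the same hypothesis-forced relations $k \prec 2k$, $k \prec 3k$, $2k \prec 5k$, applies $(k,2k,3k)$ to force $3k \prec 2k$, and finishes on the progression $(k,3k,5k)$. The only difference is packaging: the paper chains $3k \prec 2k \prec 5k$ and exhibits $k \prec 3k \prec 5k$ directly as an increasing 3-AP, whereas you first force $5k \prec 3k$ (using the extra relation $k \prec 5k$) and then close the cycle $2k \prec 5k \prec 3k \prec 2k$.
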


\begin{proof}
The hypothesis forces $k \prec 2k$, $k \prec 3k$, $2k \prec 5k$. The progression
$(k,2k,3k)$ with $k \prec 2k$, $k \prec 3k$ forces $3k \prec 2k$; then
$3k \prec 2k \prec 5k$, and $(k,3k,5k)$ is an increasing 3-AP.
\end{proof}

\begin{lemma}[van der Corput absorption; \cite{Gen26}]\label{thm:vdc}
Order the odd residues $c$ modulo $2^k$ by the bit-reversal (van der Corput) rank
of $(c-1)/2 \bmod 2^{k-1}$. Then for any two classes $a$ ordered before $b$, the
class $2b - a \bmod 2^k$ is ordered strictly before $b$.
\end{lemma}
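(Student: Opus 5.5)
The plan is to translate the statement into a statement about bit-reversal on residues modulo $2^{k-1}$, and then compare bits from the least significant end.

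\textbf{Reindexing.} Write $a = 2x+1$ and $b = 2y+1$ with $x, y \in \Z/2^{k-1}$. Then
\[
2b - a \;=\; 2(2y - x) + 1,
\]
so the class $2b-a \bmod 2^k$ is again odd, and its index is $z = 2y - x \bmod 2^{k-1}$. Let $\rho$ denote reversal of the $k-1$ binary digits. The claim becomes:
\[
\rho(x) < \rho(y) \;\Longrightarrow\; \rho(z) < \rho(y).
\]

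\textbf{Bit-reversal order in terms of low bits.} Comparing $\rho(x)$ and $\rho(y)$ as integers means comparing $x$ and $y$ starting from their least significant bit. So let $j \le k-2$ be the lowest bit position at which $x$ and $y$ differ. Such a $j$ exists because $a \ne b$. The hypothesis $\rho(x) < \rho(y)$ says exactly that $x$ and $y$ agree on bits $0, \dots, j-1$, that $x$ has bit $j$ equal to $0$, and that $y$ has bit $j$ equal to $1$.

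\textbf{Computing the relevant bits of $z$.} Write $z = y + (y - x)$. The difference $y - x$ is divisible by $2^j$, since the two numbers agree below bit $j$. It is $\equiv 2^j \pmod{2^{j+1}}$, since bit $j$ of $y - x$ is $1 - 0 = 1$ with no borrow from below. Adding $y - x$ to $y$ therefore does the following:
\begin{itemize}
\item bits $0, \dots, j-1$ of $y$ are unchanged, because we add a multiple of $2^j$;
\item bit $j$ becomes $1 + 1 \equiv 0$, with a carry passing upward.
\end{itemize}
Reduction modulo $2^{k-1}$ affects only bits at or above position $k-1$, so it does not touch any of this, because $j \le k-2$. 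Hence $z$ agrees with $y$ below bit $j$, and at bit $j$ we have $z_j = 0 < 1 = y_j$. This gives $\rho(z) < \rho(y)$, and in particular $z \ne y$, which is the required strict precedence of $2b-a$ before $b$.

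\textbf{Main obstacle.} There is essentially no hard step. The only point that needs care is the carry analysis at bit $j$: one must check that the subtraction $y - x$ produces no borrow below bit $j$, and that the wrap-around modulo $2^{k-1}$ cannot disturb bits of index at most $j$. Both follow directly from $j \le k-2$ and from the fact that $x$ and $y$ agree on bits $0, \dots, j-1$.
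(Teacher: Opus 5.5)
Your proof is correct and follows essentially the same route as the paper's: both locate the lowest bit $j$ where $x=(a-1)/2$ and $y=(b-1)/2$ differ, and show that $z=2y-x$ agrees with $y$ below bit $j$ but has bit $0$ at position $j$. The paper reaches bit $j$ by repeated halving ($\zeta'=2\beta'-\alpha'$) and then uses the parity $\zeta\equiv\alpha \pmod 2$. You do it in one step via the carry in $y+(y-x)$, which is the same argument unrolled; equivalently, $z-x=2(y-x)$ is divisible by $2^{j+1}$, so $z$ agrees with $x$ on bits $0,\dots,j$.
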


\noindent This is not new: it is Lemma~2.1 of \cite{Gen26} in different
coordinates (via $c \mapsto (c-1)/2$). Explicitly, writing $\rho_m$ for the
bit-reversal rank modulo $m = 2^{k-1}$, the statement is the first
equivalence
$\rho_m(r) < \rho_m(r+\delta) \iff \rho_m(r+\delta) > \rho_m(r+2\delta)$
of that lemma, with $r = (a-1)/2$ and $\delta = (b-a)/2$. The underlying
binary recursion goes back to \cite{DEGS77} (p.~81) and to
Ardal, Brown and Jungi\'c \cite{ABJ11}; the companion modular existence
theorem for powers of two is due to Nathanson \cite{Nat77}. We include a
short proof for completeness and to make the change of coordinates from
Geneson's formulation explicit. The lemma is not used later in the paper.

\begin{proof}
Write $\alpha = (a-1)/2$, $\beta = (b-1)/2$; the completion class has parameter
$\zeta = 2\beta - \alpha \bmod 2^{k-1}$. If the lowest bits of $\alpha,\beta$
agree, then so does $\zeta$'s, and dividing by 2 preserves the form
$\zeta' = 2\beta' - \alpha'$. At the first bit where they differ, the assumption
$\mathrm{rev}(\alpha) < \mathrm{rev}(\beta)$ forces $\alpha$'s bit to be $0$ and
$\beta$'s to be $1$; since $\zeta \equiv \alpha$ at that bit, $\zeta$'s bit is
$0$, so $\mathrm{rev}(\zeta) < \mathrm{rev}(\beta)$ strictly.
\end{proof}

\subsection{Partition rigidity}

\begin{lemma}[Ray piercing]\label{lem:ray}
Let $\Z^+ = A \sqcup B$ with both parts permutable. Then for every $a \in A$ and
every $d \ge 1$, the set $\{k \ge 0 : a + 2^k d \in B\}$ is infinite, and
symmetrically with $A$ and $B$ exchanged.
\end{lemma}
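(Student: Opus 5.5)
We argue by contradiction and reduce to the orbit obstruction, Lemma~\ref{lem:orbit}, applied to the part $A$. Fix $a \in A$ and $d \ge 1$. Suppose the set $\{k \ge 0 : a + 2^k d \in B\}$ is finite. Then there is $K$ such that $u_k := a + 2^{k} d \in A$ for every $k \ge K$.

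The key observation is the doubling identity
\[
2u_k - a \;=\; 2a + 2^{k+1} d - a \;=\; a + 2^{k+1}d \;=\; u_{k+1}.
\]
So the tail $u_K < u_{K+1} < \cdots$ is a strictly increasing sequence in $A$ with $u_{k+1} = 2u_k - f_k$, where $f_k = a$ for every $k$. This is exactly the configuration forbidden by Lemma~\ref{lem:orbit}, with $S = A$ and finite set $F = \{a\} \subseteq A$. We need $a \in A$ here, which holds by hypothesis, and $A$ is permutable by hypothesis. Reindexing $u_K, u_{K+1}, \dots$ as $u_0, u_1, \dots$ gives the required infinite sequence, so we have a contradiction. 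Hence $a + 2^k d \in B$ for infinitely many $k$.

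The statement with $A$ and $B$ exchanged follows by the identical argument, since the hypotheses are symmetric: both parts are permutable and they partition $\Z^+$.

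There is no real obstacle. The only points to check are these:
\begin{itemize}
\item the orbit lemma needs $F \subseteq S$, which holds because $f_k = a$ is the seed itself, lying in the same part;
\item the sequence must be strictly increasing, which follows from $d \ge 1$;
\item the complement of the set in question is cofinite, which is exactly what finiteness gives, since every value $a + 2^k d$ lies in $A$ or in $B$.
\end{itemize}
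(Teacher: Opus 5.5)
Your proposal is correct and is essentially the paper's proof: assuming $a + 2^k d \in A$ for all $k \ge K$, the tail satisfies $u_{k+1} = 2u_k - a$ inside $A$, which contradicts the orbit obstruction (Lemma~\ref{lem:orbit}) with $S = A$ and $F = \{a\}$. Your explicit checks ($F \subseteq S$ because $a \in A$, and strict increase because $d \ge 1$) are exactly the hypotheses that lemma needs.
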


\begin{proof}
If $a + 2^k d \in A$ for all $k \ge k_0$, the values $u_j = a + 2^{k_0+j} d$
form an infinite orbit $u_{j+1} = 2u_j - a$ inside $A$ with $F = \{a\}$,
contradicting Lemma~\ref{lem:orbit}.
\end{proof}

Every affine doubling ray based in one team therefore meets the opposite
team infinitely often (the precise content of the lemma --- it does not
assert literal alternation); partitions
such as ``even 2-adic valuation vs.\ odd'' die immediately, and viable partitions
are forced toward dyadic-interval alternation, motivating the canonical
candidate $\SA$.

\begin{remark}[Class-sink algebra]\label{rem:classsink}
One algebraic fact about attacks within a residue class is worth recording,
since it explains why powers of two are the natural moduli here. Fix
$m = 2^k$ and a target class $r$. The attacker map $\varphi_r(b) = 2b - r$
on $\Z/m\Z$ is conjugate to doubling under $b \mapsto b - r$; hence every
orbit eventually reaches the unique fixed point $r$. For a general modulus
$m$ the same conjugacy shows that the cycle structure of $\varphi_r$ does
not depend on $r$, so no class is distinguished.\footnote{We thank
Jesse Geneson for correcting the general-$m$ form of this statement; an
earlier version asserted an $r$-dependent cycle structure.}
\end{remark}

\section{Contiguous blocks cannot work}\label{sec:contig}

\begin{definition}
The \emph{zone system} $Z(M)$: arrange the interval $(M, 2M]$ so that (a) no
monotone 3-AP occurs, and (b) for every $y < z$ in $(M,2M]$ with
$2y - z \in (M/4, M/2]$, $z$ precedes $y$.
\end{definition}

Condition (b) is forced when the block $(M/4, M/2]$ is entirely placed before a
contiguous run of $(M, 2M]$ within a team's sequence (its elements' completions
$2y - x$ then must precede $y$).

\begin{lemma}[Halving descent]\label{lem:halving}
If $Z(M)$ is satisfiable then so is $Z(\lfloor M/2 \rfloor)$.
\end{lemma}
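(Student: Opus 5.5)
The plan is to realise $Z(\lfloor M/2\rfloor)$ as the restriction of a satisfying order for $Z(M)$ to its even elements, rescaled by the factor $2$. Fix an order $\prec$ of $(M,2M]$ satisfying (a) and (b) of $Z(M)$. Put $M' = \lfloor M/2\rfloor$ and define an order on $(M',2M']$ by declaring $s$ before $t$ exactly when $2s \prec 2t$. First I check that the map $t \mapsto 2t$ sends $(M',2M']$ into $(M,2M]$. An integer $t > M' \ge (M-1)/2$ satisfies $2t > M-1$, so $2t \ge M+1$; hence $2t > M$ in both parities of $M$. Also $2t \le 4M' \le 2M$. The order is therefore well defined, being the pullback of $\prec$ along an injection.

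Condition (a) transfers at once. The map $t\mapsto 2t$ is affine and increasing, so a monotone 3-AP $s_1,s_2,s_3$ in the new order, read in position order, would give a monotone 3-AP $2s_1,2s_2,2s_3$ in $\prec$, in the same position order.

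The main step, and the only one needing care, is condition (b). Take $y<z$ in $(M',2M']$ with $w := 2y-z \in (M'/4, M'/2]$. Then $2y<2z$ lie in $(M,2M]$, and $2(2y)-2z = 2w$. I must show that $2w \in (M/4, M/2]$; then (b) for $Z(M)$ gives $2z \prec 2y$, which means $z$ precedes $y$ in the new order.

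For the upper bound, $w \le M'/2 \le M/4$ gives $2w \le M/2$. For the lower bound the naive estimate $2w > M'/2$ loses a little when $M$ is odd, so I will use integrality here.
\begin{itemize}
\item If $M$ is even, $w > M'/4 = M/8$ gives $2w > M/4$ directly.
\item If $M$ is odd, $w > (M-1)/8$ gives $8w > M-1$, so $8w \ge M$. Equality $8w = M$ is impossible because $M$ is odd, so $8w > M$, that is, $2w > M/4$.
\end{itemize}
(For very small $M$ the interval $(M'/4,M'/2]$ may contain no integers, and then (b) is vacuous.)

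This rounding point is where I expect the only real obstacle. One might first try the odd elements $2t+1$ when $M$ is odd. That route fails: for $M\equiv 1 \pmod 4$ the completion $2(2y-z)+1$ can reach $(M+1)/2 > M/2$ and so escape the target block. Using the even elements uniformly avoids this problem. So the argument consists of the injection check, the transfer of (a), and the two-sided bound on $2w$ with its parity case split. Together these show that $Z(\lfloor M/2\rfloor)$ is satisfiable.
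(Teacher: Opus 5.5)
Your proof is correct and is the same argument as the paper's: restrict a witness to the even values of $(M,2M]$ and halve. Pulling back along $t \mapsto 2t$ on $(M',2M']$ automatically performs the paper's deletion of the superfluous top element $2M$ when $M$ is odd, and you have written out the ``routine'' boundary cases the paper leaves implicit. In particular, your integrality step for the lower bound $2w > M/4$ at odd $M$ is exactly the point that needs care.
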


\begin{proof}
Restrict a witness to the even values of $(M,2M]$ and halve; arithmetic
progressions and the zone constraints scale exactly (the boundary cases are
routine), and a superfluous top element may be deleted.
\end{proof}

\begin{lemma}[Base cases; machine-checked]\label{lem:bases}
$Z(M)$ is unsatisfiable for $16 \le M \le 31$.
\end{lemma}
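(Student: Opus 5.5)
The statement is a finite claim: for each of the sixteen values $M = 16, 17, \dots, 31$, the zone system $Z(M)$ has no solution. It is labelled machine-checked, so the plan is to reduce each instance to a propositional satisfiability problem. We then certify unsatisfiability with a SAT solver and an independently checkable proof trace. A hand argument for all sixteen instances is not expected to be practical. Note also that Lemma~\ref{lem:halving} does not propagate infeasibility \emph{upward}: it says satisfiability descends from $M$ to $\lfloor M/2\rfloor$. So infeasibility of $Z(M)$ for $16 \le M \le 31$ gives, by contraposition, infeasibility for every $M \ge 16$. Indeed, any $M \ge 32$ halves repeatedly into the window $[16,31]$. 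This is why the base range is exactly one dyadic octave, and the certificate must cover every $M$ in it, not just one representative.

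The encoding I would use is the standard order encoding. For each scale $M$, introduce a Boolean variable $p_{u,v}$ for each pair $u < v$ in $(M,2M]$, meaning $u \prec v$. Add transitivity clauses for all triples so that the variables describe a linear order. For every 3-AP $a < b < c$ inside $(M,2M]$, forbid both monotone placements: add $\neg(a\prec b \wedge b \prec c)$ and $\neg(c \prec b \wedge b \prec a)$. Then add the unit clauses from condition~(b). For every $y<z$ in $(M,2M]$ with $2y-z \in (M/4, M/2]$, assert $z \prec y$. Since $M/4$ may be non-integral, the interval $(M/4,M/2]$ is read as a set of integers, and I will fix that convention explicitly so that the encoding matches the one used in Lemma~\ref{lem:halving}. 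A useful preprocessing step is to note that the unit clauses from (b) are numerous. They force a large partial order, and propagating them first should shrink the instances considerably. With about $M^2/2 \le 500$ variables and $O(M^3)$ clauses, these instances are small.

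For certification, I would run CaDiCaL on each instance to obtain DRAT proofs and verify them with drat-trim. As a cross-check, a second, independently written encoding (for instance, position variables $x_{v,i}$ with a CP-SAT model) should also be run, and its UNSAT verdicts compared with the first. If any instance turns out to be satisfiable, the lemma is false as stated, and the base range would have to move up. The main obstacle is not solver time. It is making sure that the encoding matches the definition exactly, especially the boundary conventions in (b) and the treatment of odd $M$ under halving. An off-by-one in the zone interval could make an instance spuriously unsatisfiable. I would therefore also test the checker on perturbed instances, such as dropping one zone constraint, to confirm that it detects satisfiable variants.
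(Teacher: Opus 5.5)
Your plan is the paper's route: the paper proves the lemma only as a finite check of the sixteen instances, by exhaustive SAT with an eager transitivity encoding (script \texttt{e195\_zone\_bases.py}). Your DRAT/\texttt{drat-trim} step would go beyond the paper, which ships no certificates for these base cases. One wording slip: by contraposition, Lemma~\ref{lem:halving} \emph{does} propagate infeasibility upward, from $\lfloor M/2\rfloor$ to $M$; what it cannot do is carry infeasibility downward or across the octave, which is why every $M \in [16,31]$ must be checked individually, as you correctly conclude.
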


\noindent This is a finite check on sixteen instances of at most $31$
elements each, by exhaustive SAT with eager transitivity (script
\texttt{e195\_zone\_bases.py}); the range is tight, since $Z(M)$ is
\emph{satisfiable} for $12 \le M \le 15$. Theorem~\ref{thm:contig} below
is the only result depending on this verdict, and it is not used in the
proof of the main theorem.

\begin{theorem}[No contiguous-run solutions]\label{thm:contig}
In any 3-AP-free permutation of $\SA$, infinitely many blocks $B_{2k}$ are
\emph{not} placed as contiguous runs.
\end{theorem}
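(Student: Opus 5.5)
We argue by contradiction. Suppose $\pi$ is a 3-AP-free permutation of $\SA$ in which all but finitely many blocks $B_{2k}$ are placed as contiguous runs. The plan is to show that for all large $k$ the restriction of $\pi$ to $B_{2k}$, a linear order of $(M,2M]$ with $M = 2^{2k-1}$, is a witness for $Z(M)$. Lemma~\ref{lem:halving} then propagates satisfiability downward to some $M' \in [16,31]$, which contradicts Lemma~\ref{lem:bases}.

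\textbf{Step 1: the lower block is finished before the run.} Fix $k$ large with $B_{2k}$ and $B_{2k-2} = (M/4, M/2]$ both contiguous runs. Both are blocks of $\SA$, since $2k-2$ is even. Because $B_{2k}$ is a contiguous run, the relative order of $B_{2k-2}$ and $B_{2k}$ is all-or-nothing: either every element of $B_{2k-2}$ precedes the whole run of $B_{2k}$, or every element follows it. We rule out the second case. If infinitely many of the large contiguous blocks were placed after their predecessor block, we would get a chain of blocks each placed after the next higher one, and with only finitely many positions available below any fixed position this is impossible. Concretely, the first element of the run $B_{2k}$ sits at a finite position, and so only finitely many blocks can precede it. Hence for all sufficiently large $k$ the whole of $B_{2k-2}$ precedes the run of $B_{2k}$.

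A single even $k$ suffices, so it is enough to find one such $k$ with $M \ge 32$. If some large contiguous block $B_{2k}$ preceded $B_{2k-2}$, I would look at $B_{2k+2}$ instead. Iterating, some consecutive pair must eventually have the lower block first, because otherwise positions strictly decrease along $B_{2k} \succ B_{2k+2} \succ \cdots$. This descent argument is the same mechanism as in Lemma~\ref{lem:orbit}.

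\textbf{Step 2: verify condition (b).} Take $y<z$ in $(M,2M]$ with $x := 2y-z \in (M/4,M/2]$. Then $x<y<z$ is an increasing AP, and $x$ precedes $y$ by Step~1. Placing $z$ after $y$ would therefore create an increasing 3-AP, so $z \prec y$. Condition (a) is immediate, since $\pi$ has no monotone 3-AP. Thus $Z(M)$ holds for $M=2^{2k-1}$ with $M \ge 32$.

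\textbf{Step 3: descend.} Apply Lemma~\ref{lem:halving} repeatedly. Starting from a power of two $M \ge 32$, halving lands exactly on $M'=16$, so $Z(16)$ would be satisfiable, contradicting Lemma~\ref{lem:bases}. Hence infinitely many blocks are not contiguous.

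The main obstacle is Step~1: establishing that the lower block lies entirely before the run, rather than after it, for at least one large scale. The descending-positions argument should handle it. I expect the remaining steps to be routine, the only delicate point being the boundary bookkeeping inside Lemma~\ref{lem:halving}, which we take as given.
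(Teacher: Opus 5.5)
Your proof is correct and is essentially the paper's argument. The contiguous runs of $B_{2k-2}$ and $B_{2k}$ are disjoint segments, so one precedes the other; this uses that \emph{both} blocks are runs, not just $B_{2k}$. The lower-first case makes the run of $B_{2k}$ a witness for $Z(M)$, contradicting Lemmas~\ref{lem:halving} and~\ref{lem:bases}, and higher-first for every large $k$ gives an infinite strictly decreasing sequence of positions.

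The one flaw is the sentence in Step~1 claiming the lower block comes first for \emph{all} sufficiently large $k$. That does not follow from finiteness of positions, since the order of consecutive runs could alternate. You never use it, though: Steps~2 and~3 need only a single lower-first pair, which your descent argument in the next paragraph correctly establishes.
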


\begin{proof}
Lemmas~\ref{lem:halving} and~\ref{lem:bases} together make $Z(M)$
unsatisfiable for every $M \ge 16$ (halve repeatedly into $[16, 31]$).
Suppose for contradiction that every block $B_{2k}$ with $k \ge k_0 \ge 3$
is placed as a contiguous run. Contiguous runs are disjoint segments of the
permutation, so for each $k > k_0$ one of the runs $B_{2k-2}$, $B_{2k}$
entirely precedes the other. If $B_{2k-2}$'s run came first, then the whole
block $(M/4, M/2]$, $M = 2^{2k-1}$, would be placed before the run of
$(M, 2M]$, forcing condition (b) of the zone system as explained above ---
so the run of $B_{2k}$ would witness $Z(M)$ with $M \ge 16$, which is
unsatisfiable. Hence $B_{2k}$'s run precedes $B_{2k-2}$'s for every
$k > k_0$, and the finish positions of the runs of
$B_{2k_0}, B_{2k_0+2}, B_{2k_0+4}, \dots$ form an infinite strictly
decreasing sequence of positive integers --- impossible.
\end{proof}

\section{The order gadget and the reduction}\label{sec:og}

The search described in the appendices compresses, for the canonical
candidate $\SA$, into a one-scale-at-a-time statement
about linear orders of single dyadic blocks. This section states that
reduction precisely and proves it unconditionally --- directly on a
hypothetical permutation, using nothing from the appendices --- and reports
the machine verification of its finite instances; Section~\ref{sec:c3} then proves the
required infeasibility by hand on the residue class $M \equiv 0 \pmod 8$
--- which contains every dyadic scale --- completing the proof of the main
theorem (Theorem~\ref{thm:main}). Throughout,
$b_j = M + j$ (\emph{bottoms}) and $t_i = 2M - i$ (\emph{tops}) denote
elements of the interval $(M, 2M]$.

\begin{definition}[Order gadget]\label{def:og}
For an integer $M \ge 16$, the \emph{order gadget} $\OG(M)$ asks for a
linear order $\prec$ of the interval $(M, 2M]$ such that
\begin{enumerate}
\item[(i)] \emph{(no monotone AP)} for every arithmetic progression
$a < b < c$ inside $(M, 2M]$, neither $a \prec b \prec c$ nor
$c \prec b \prec a$ holds; and
\item[(ii)] \emph{(guards precede bottoms)} for $x \in \{15, 16\}$ and
$1 \le j \le x/2$, the \emph{guard} $t_{x-2j} = 2M + 2j - x$ precedes
the \emph{bottom} $b_j = M + j$.
\end{enumerate}
$\OG(M)$ is \emph{infeasible} if no such order exists. (For $M \le 15$
some guard coincides with the bottom it protects --- $t_{x-2j} = b_j$
exactly when $M = x - j$, e.g.\ $t_{14} = b_1$ at $M = 15$ --- and the
gadget degenerates; we never use those scales. For $16 \le M \le 22$ a
guard can coincide with a \emph{different} bottom, which is harmless:
every attack pair is a genuine pair for all $M \ge 16$.)
\end{definition}

Three pieces of boundary arithmetic, each a one-line computation
(machine-checked exhaustively in the repository, script
\texttt{e96\_reduction\_check.py}), make the gadget exactly the trace of
the infinite problem on one block: for $x \in \{15,16\}$ the completion
$2b_j - x$ lies in $(M, 2M]$ precisely for $j \le x/2$, and then equals
the guard $t_{x-2j}$ (at $(x, j) = (16, 8)$ it is $2M$ itself, still
inside the half-open block) --- so (ii) lists \emph{all} in-block
completions of pairs $(x, b_j)$ and nothing else. All other completions
lie above $2M$; at the dyadic scales used in Theorem~\ref{thm:ogred} they
lie in the intervening odd dyadic block and hence outside $\SA$. Finally the
guard--bottom pair's own downward completion $2b_j - t_{x-2j} = x$ is the
attacking value itself, below the block, so the attack edges generate no
unmodeled in-block constraint.

\begin{theorem}[Order-gadget reduction; unconditional]\label{thm:ogred}
If $\OG(2^{2t-1})$ is infeasible for infinitely many $t \ge 4$, then
$\SA$ is not $3$-permutable.
\end{theorem}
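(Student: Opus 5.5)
We argue by contradiction: suppose $\pi$ is an admissible permutation of $\SA$. With $M = 2^{2t-1}$ the block $(M,2M] = B_{2t}$ lies in $\SA$. The restriction of $\pi$'s position order to $B_{2t}$ is a linear order, and it satisfies condition (i) of $\OG(M)$ automatically, since a monotone 3-AP inside the block in that order would be a monotone 3-AP of $\pi$. Everything therefore hinges on condition (ii), and we aim to show it holds for all but finitely many $t$. Then some $t$ with $\OG(2^{2t-1})$ infeasible yields a feasible order, which is the contradiction.

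The key step is the one-sided placement of $15$ and $16$, both of which lie in $B_4 = (8,16] \subseteq \SA$. Let $q = \max(\pos(15),\pos(16))$. Only finitely many values occupy positions $\le q$, so for all large $t$ the bottoms $b_j = M+j$ with $1\le j\le 8$ are all placed after position $q$; we fix such a $t \ge 4$ (with $\OG$ infeasible there). Now take $x \in \{15,16\}$ and $1 \le j \le x/2$. Then $x < b_j$ and $x \prec b_j$, so $(x,b_j)$ is an increasingly placed pair. Its completion $2b_j - x = 2M+2j-x$ equals the guard $t_{x-2j}$, which by the boundary arithmetic preceding the theorem lies in $(M,2M] \subseteq \SA$. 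If the guard were placed after $b_j$, the triple $(x, b_j, t_{x-2j})$ would be an increasing 3-AP. Since $\pi$ is a bijection, the guard and $b_j$ are distinct (for $M \ge 16$, indeed $t_{x-2j} > b_j$), so the guard must precede $b_j$. This is exactly (ii).

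The remaining points are routine checks. Restricting to the block preserves AP-freeness for progressions entirely inside $(M,2M]$, which is all that (i) asks about, so no constraint from outside the block needs to be respected. The theorem only needs the hypothesis "infinitely many $t$" so that we can choose $t$ past the finite threshold set by $q$. The main (mild) obstacle is this quantifier bookkeeping: the threshold depends on the unknown $\pi$, which is why infinitely many infeasible scales are required rather than one.

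It is also worth confirming that the completions $2b_j - x$ with $j > x/2$, which land above $2M$ in the odd block $B_{2t+1} \not\subseteq \SA$, impose nothing. This is consistent, because the argument only derives necessary conditions and never needs to use those constraints.
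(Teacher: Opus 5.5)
Your proposal is correct and is essentially the paper's proof: you restrict $\pi$ to $B_{2t}$, inherit (i), and obtain (ii) from the increasingly placed pairs $(x, b_j)$ once all eight bottoms lie after $\max(\pos(15),\pos(16))$. The only difference is cosmetic: the paper finishes by pigeonhole (each infeasible scale puts a value of the disjoint set $\{M+1,\dots,M+8\}$ among the first $P$ positions), whereas you pick an infeasible $t$ beyond the finite threshold fixed by those positions, which is the same finiteness argument in contrapositive form.
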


\begin{proof}
Suppose $\pi$ is a $3$-AP-free permutation of $\SA$ and let
$P = \max(\pos(15), \pos(16))$; note $15, 16 \in B_4 \subseteq \SA$. Fix
$t \ge 4$ and put $M = 2^{2t-1}$, so that $(M, 2M] = B_{2t} \subseteq
\SA$ and $15, 16 < M$.

We claim that some bottom $b_j$ with $1 \le j \le 8$ has
$\pos(b_j) \le P$. Suppose not, and order the block by position:
$u \prec v :\!\iff \pos(u) < \pos(v)$. Constraint (i) is inherited from
$\pi$. For (ii), take $x \in \{15, 16\}$ and $1 \le j \le x/2$. The pair
$(x, b_j)$ is placed increasingly ($\pos(x) \le P < \pos(b_j)$, $x < M <
b_j$), and its completion $z = 2b_j - x = t_{x-2j}$ lies in
$(M, 2M] \subseteq \SA$ and differs from $b_j$ (else $M = x - j \le
15$). If $\pos(z) > \pos(b_j)$, then $(x, b_j, z)$ is a monotone $3$-AP
of $\pi$; hence $\pos(z) < \pos(b_j)$, i.e.\ $t_{x-2j} \prec b_j$. Thus
$\prec$ witnesses $\OG(M)$, contradicting infeasibility.

So for every $t \ge 4$ with $\OG(2^{2t-1})$ infeasible, some element of
$\{M+1, \dots, M+8\}$, $M = 2^{2t-1}$, occupies a position $\le P$.
These blocks are disjoint, so infinitely many such $t$ would place
infinitely many distinct values among the first $P$ positions ---
impossible.
\end{proof}

\begin{remark}[Chunk form; the crowns cannot defend every block]
\label{rem:ogchunk}
In the coordinates of Theorem~\ref{thm:chunk} the identical argument
reads: in any valid scheme $s$, for every $t \ge 4$ with
$\OG(2^{2t-1})$ infeasible, some bottom value in
$\{M+1, \dots, M+8\}$ of block $B_{2t}$ has stage
$\le \max(s(15), s(16))$. For if all eight had strictly larger stage,
the induced order on the block (by stage, then fiber position) would
satisfy $\OG(M)$: condition (i) holds for in-block triples directly by
(A) and (B), and each attack $t_{x-2j} \prec b_j$ is forced whenever
$s(x) < s(b_j)$ --- by (A) the guard's stage cannot strictly exceed the
bottom's, and at equal stages the forced-pair row
$s(x) < s(y) = s(z) \Rightarrow z \prec y$ of (B) applies. The overflow
step is then the finiteness of the fibers at stages
$\le \max(s(15), s(16))$; \emph{no} displacement normalization
(Lemma~\ref{lem:normal}) is needed anywhere in this route. This is the
precise sense in which the two \emph{crowns} $15$ and $16$ of
Appendix~\ref{sec:compobs} cannot defend every dyadic block: they would
have to be placed below the bottom-eight of all but finitely many
blocks. We use only the direction scheme $\Rightarrow$ order; the
converse (rebuilding a capped scheme from an $\OG$ witness, which would
make the crown-cap infeasibility at a horizon \emph{equivalent} to
$\OG$ infeasibility at its top scale) has been verified only at the
single scale $M = 128$, where the relevant family-MUS is single-block,
and is not used here.
\end{remark}

\begin{proposition}[Machine verification; CaDiCaL]\label{prop:ogmachine}
$\OG(M)$ is infeasible for every $M$ with $16 \le M \le 200$, and
for $M = 512$. In particular both tested scales of the dyadic family
$\{2^{2t-1}\}_{t \ge 4} = \{128, 512, 2048, \dots\}$ are infeasible.
The encodings assert (i) over all in-block triples and (ii) as unit
clauses; order axioms are handled by lazy transitivity refinement,
which is sound for infeasibility verdicts and was cross-validated
against full eager $O(n^3)$ transitivity encodings at $M = 40, 44$ and
--- as an end-to-end recheck --- at $M = 128$ (fresh eager instance,
$690{,}831$ clauses, UNSAT in about a second). The run at $M = 2048$
is unresolved (no verdict after $200+$ refinement rounds), and the
support growth reported below makes it unlikely that any finite sweep
can settle the general statement.
\end{proposition}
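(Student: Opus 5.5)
The statement is a finite computational claim: for each $M$ in $\{16,\dots,200\}\cup\{512\}$, a specific finite constraint system has no solution. I would prove it by exhibiting, for each instance, an unsatisfiability certificate for a faithful propositional encoding of $\OG(M)$. The plan has three parts: build the encoding, argue it is sound, and make the UNSAT verdicts checkable.

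\textbf{Encoding.} Introduce one Boolean variable $p_{u,v}$ for each unordered pair $u<v$ in $(M,2M]$, read as ``$u\prec v$''. For every 3-AP $a<b<c$ in the block, with $b=(a+c)/2$, add two clauses. One forbids $a\prec b\prec c$ and the other forbids $c\prec b\prec a$. Each guard axiom (ii) becomes a unit clause $t_{x-2j}\prec b_j$ for $x\in\{15,16\}$ and $1\le j\le x/2$. When $16\le M\le 22$ a guard may coincide with a different bottom; that pair is still a genuine pair and is encoded the same way.

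\textbf{Soundness of lazy transitivity.} Transitivity clauses are omitted at first. The solver is run; if it returns an assignment, I search that assignment for a directed 3-cycle, add the transitivity clause that cycle violates, and run again. The key observation is that every clause added is valid in every linear order. So each intermediate formula is implied by the full eager encoding $F_{\text{eager}}$, i.e.\ it is a relaxation of it. Consequently an UNSAT verdict at any round implies $F_{\text{eager}}$ is UNSAT, which is exactly infeasibility of $\OG(M)$.

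The converse direction is not needed for the proposition. It is worth noting anyway that a tournament with no 3-cycle is transitive, so if the loop ever terminates SAT, the final assignment really is a linear order.

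\textbf{Checkability and cross-validation.} For each instance I would emit a DRAT proof from CaDiCaL on the final (relaxed) formula and verify it with an independent checker such as drat-trim. This makes each verdict an actual certificate rather than a solver's word. For extra confidence I would rerun the eager $O(n^3)$ encoding at small scales, $M=40$ and $M=44$, and at $M=128$, and confirm the verdicts agree.

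The dyadic sentence in the statement then follows immediately. The members of the family $\{2^{2t-1}\}_{t\ge4}$ covered by the tested range are exactly $128$ and $512$, and both are among the instances checked.

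\textbf{Main obstacle.} I expect the difficulty to be the size of the $M=512$ instance. The block has $n=512$ elements, giving about $n^2/2\approx 1.3\times10^5$ pair variables and about $n^2/2$ AP clauses. An eager transitivity encoding would add about $n^3/3\approx 4.5\times10^7$ clauses. Lazy refinement is therefore essential here. The risk is that it needs many rounds; that is exactly what happens at $M=2048$, where no verdict has been reached.

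To keep refinement cheap, I would add, in each round, all violated 3-cycles found in the current assignment rather than one at a time. I would also seed the first formula with transitivity clauses on triples that involve the guard and bottom elements. Those elements are where the conflict is expected to concentrate, given that a three-axiom core already suffices; that core is Theorem~\ref{thm:c3core}, stated for the residue class $M\equiv 0\pmod 8$.
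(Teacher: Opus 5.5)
This is essentially the paper's own verification: the same pair-variable encoding with two clauses per AP and the guards as unit clauses, and lazy transitivity refinement justified exactly as you justify it (every added clause holds in every linear order, so an UNSAT relaxation implies UNSAT of the eager encoding). Your eager count also reproduces the paper's $690{,}831$ clauses at $M=128$, namely $2\binom{128}{3}$ transitivity clauses plus $2\cdot 4032$ AP clauses plus $15$ units. The one difference is that the paper attaches no DRAT certificates to this sweep; it has them only for the C3-core instances at $M=128, 512$. Your per-instance certification would therefore strengthen what the paper reports rather than merely reproduce it.
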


\subsection{Attack cores: locating the load-bearing axioms}

The hypothesis of Theorem~\ref{thm:ogred} demands $\OG$-infeasibility at
infinitely many dyadic scales. The following machine facts locate a
three-axiom core of the gadget that is already infeasible on exactly the
residue class containing the dyadic family; Section~\ref{sec:c3} proves
that core infeasibility by hand.

\begin{proposition}[Attack cores; machine-checked]\label{prop:cores}
For every swept $M$ (all of $40 \le M \le 100$, both parities, and
$M \in \{104, 108, 112, 120, 128, 150, 200\}$), constraint (i)
together with only the eleven attack units
$15\{1..7\} \cup 16\{1..4\}$ --- that is, $t_{15-2j} \prec b_j$ for
$j \le 7$ and $t_{16-2j} \prec b_j$ for $j \le 4$ --- is already
infeasible. On residue subclasses, sharper cores hold, with the following
verdicts \emph{at every swept scale} (we make no claim outside the swept
range): the four units $\{t_{13} \prec b_1,\ t_{11} \prec b_2,\
t_5 \prec b_5,\ t_{10} \prec b_3\}$ are infeasible with (i) precisely at the
swept $M \equiv 0 \pmod 4$, and the three units
\[
\mathrm{C3} \;=\; \{\,t_5 \prec b_5,\quad t_3 \prec b_6,\quad
t_{10} \prec b_3\,\}
\]
are infeasible with (i) precisely at the swept $M \equiv 0 \pmod 8$
(satisfiable at every other swept residue, including
$M \equiv 4 \bmod 8$). Only the infeasibility half of the last statement is
used below, and it is proved unconditionally for all
$M \equiv 0 \pmod 8$ in Theorem~\ref{thm:c3core}; the satisfiability half
remains a computational observation.
\end{proposition}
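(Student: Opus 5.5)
This statement is a finite computational claim, so the plan is to certify each verdict separately rather than derive it. The infeasible and satisfiable halves get different kinds of evidence. For each swept $M$ I would build the instance exactly as in Proposition~\ref{prop:ogmachine}. There is one Boolean variable $o_{u,v}$ per ordered pair of $(M,2M]$, with antisymmetry clauses. Constraint (i) becomes, for every in-block progression $a<b<c$, the two clauses $\neg(o_{a,b}\wedge o_{b,c})$ and $\neg(o_{c,b}\wedge o_{b,a})$. Only the selected attack units are added as unit clauses: the eleven units $15\{1..7\}\cup 16\{1..4\}$, the four-unit set, or C3. The $t_i$ and $b_j$ are translated via $t_i=2M-i$ and $b_j=M+j$.

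\textbf{Infeasibility verdicts.} For these I would run CaDiCaL with lazy transitivity refinement: solve, look for a directed $3$-cycle in the returned tournament, add the violated transitivity clauses, and repeat until UNSAT. This is sound, because the refined formula only ever contains clauses implied by the full order axioms, so an UNSAT answer transfers to the full encoding. For the small scales I would emit DRAT proofs and check them with an independent checker. For a sample of scales, say $M=40,64,128$, I would also cross-check against an eager $O(n^3)$ transitivity encoding written independently.

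\textbf{Feasibility verdicts.} The satisfiable residues, meaning the four-unit core at $M\not\equiv 0\pmod 4$ and C3 at $M\not\equiv 0\pmod 8$, need a different certificate. For each I would extract the satisfying linear order and check it directly with a separate short script. The script verifies that the order is a genuine permutation of $(M,2M]$, scans all $O(M^2)$ progressions for monotone placement, and confirms each selected attack unit. A witness checked this way is a complete certificate, so these verdicts need no solver trust.

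\textbf{Main obstacle.} I expect the hard part to be the larger infeasible instances, around $M=128$--$200$. There lazy refinement may need many rounds, and DRAT proofs become large. I would first try seeding the refinement with all transitivity triples among the roughly thirty elements nearest $b_1,\dots,b_8$ and nearest $3M/2$, since the cores suggest the contradiction lives there. If that fails, I would fall back to the eager encoding at those few scales. The word ``precisely'' in the statement is then just the conjunction of the two halves over the swept set, with no claim beyond it. Only the infeasibility of C3 is load-bearing, and it is reproved by hand in Theorem~\ref{thm:c3core}, so the machine evidence here serves only to locate that core.
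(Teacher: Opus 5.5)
Your plan follows the paper's route: the proposition has no hand proof, only a finite solver sweep using the encoding of Proposition~\ref{prop:ogmachine} (constraint (i) on all in-block triples, the selected attacks as unit clauses, lazy transitivity refinement), and, as you note, the only load-bearing half is re-proved by hand in Theorem~\ref{thm:c3core}. Your DRAT checks and independently verified satisfying orders go beyond what the paper supplies for these verdicts (it ships certificates only for the C3 refutations at $M=128$ and $M=512$); make sure your pair encoding enforces totality, i.e.\ exactly one of $o_{u,v},o_{v,u}$, and not only antisymmetry.
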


Every scale of the dyadic family is $\equiv 0 \pmod 8$, so
Theorem~\ref{thm:ogred} needs only the infeasibility of
(i)${}\wedge{}\mathrm{C3}$ on the class $M \equiv 0 \pmod 8$. That is
exactly Theorem~\ref{thm:c3core} below.

\begin{remark}[What the experiments suggested about the shape of a proof]
Three recurring computational observations across the tested scales shaped
the proof in Section~\ref{sec:c3}. We record them because they explain the
form the argument takes; none is a theorem, and nothing below depends on
them.
(1) Deletion-minimal refutation supports were observed to grow linearly
with $M$ (about $8$ triples per unit of $M$ over the swept range), which
suggested that no fixed finite list of
parametric AP-triples would certify infeasibility at all scales, and
motivated looking for $\Theta(M)$-length mechanisms with an $O(1)$
description --- the ladder floods below are of that form. (2) The forced
relations behind the small-scale refutations were observed to be
parity-locked, reversing at
$M \not\equiv 0 \bmod 4$, which suggested that a uniform argument would have
to stay inside a residue class; the dyadic family lives in
$M \equiv 0 \pmod 8$.
(3) Fixed-denominator band-limit windows of the constraint system were
satisfiable at every scale tested, suggesting that the obstruction is
genuinely asymptotic, so that a compactness or limit argument over order
types was unlikely to reach it and a scale-uniform schema would be needed
instead.
\end{remark}

\section{The C3 core theorem and the main theorem}\label{sec:c3}

\begin{theorem}[Main theorem]\label{thm:main}
$\SA = \bigcup_{k \ge 2,\ k \text{ even}} B_k
= \bigcup_{k \ge 2,\ k \text{ even}} (2^{k-1}, 2^k]$ is not $3$-permutable:
it admits no permutation (of order type $\omega$) free of monotone
$3$-term arithmetic progressions. In particular the canonical dyadic
partition $\Z^+ = \SA \sqcup \SB{}$ is not a solution of the
Erd\H{o}s--Graham two-set problem: any partition witnessing a YES answer
to problem \#197, if one exists, must differ from the dyadic one.
\end{theorem}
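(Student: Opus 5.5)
The main theorem follows from two earlier pieces: the reduction Theorem~\ref{thm:ogred}, and the C3 core theorem (Theorem~\ref{thm:c3core}), which asserts that no linear order of $(M,2M]$ satisfies (i) together with the three units $t_5 \prec b_5$, $t_3 \prec b_6$, $t_{10} \prec b_3$ whenever $M \equiv 0 \pmod 8$. The plan is to chain them.

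First, I check that the three C3 units really are among the fifteen axioms of (ii).

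\begin{itemize}
\item The unit $t_5 \prec b_5$ is the attack $x=15$, $j=5$, since $15-2\cdot 5 = 5$.
\item The unit $t_3 \prec b_6$ is the attack $x=15$, $j=6$, since $15-12=3$.
\item The unit $t_{10} \prec b_3$ is the attack $x=16$, $j=3$, since $16-6=10$.
\end{itemize}

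All three satisfy $j \le x/2$. So every witness of $\OG(M)$ is also a witness of (i)${}\wedge{}$C3. Hence infeasibility of (i)${}\wedge{}$C3 at a scale $M \ge 16$ implies infeasibility of $\OG(M)$ at that scale. The restriction $M \ge 16$ is only needed so that the gadget is non-degenerate, and every scale used below exceeds it.

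Second, I specialise to the dyadic scales $M = 2^{2t-1}$ with $t \ge 4$. Then $M \ge 128$, and $2t-1 \ge 3$ gives $8 \mid M$. Theorem~\ref{thm:c3core} therefore makes (i)${}\wedge{}$C3 infeasible at every such $M$, and so $\OG(2^{2t-1})$ is infeasible for \emph{all} $t \ge 4$, in particular for infinitely many. Theorem~\ref{thm:ogred} then says $\SA$ is not $3$-permutable.

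For the ``in particular'' clause, I use the definitions. A YES witness to \#197 is a partition of $\Z^+$ into two permutable parts. The dyadic partition has $\SA$ as one part, and $\SA$ is not permutable, so the dyadic partition is not a witness. The $\N_0$ variant is settled as in Remark~\ref{rem:lean}: deleting the value $0$ preserves AP-freeness, so $\SA \cup \{0\}$ is not permutable either.

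All the real difficulty sits inside Theorem~\ref{thm:c3core}, whose proof the paper gives separately. Once it is available, the chaining above is routine bookkeeping. If I had to prove that core myself, I would start from the three units and propagate forced order relations along the arithmetic ladders through $b_5, b_6, b_3$ and $t_5, t_3, t_{10}$, using zigzag alternation. I would look for the point, near $3M/2$, where the residue $M \equiv 0 \pmod 8$ forces a contradiction. That propagation is the main obstacle.
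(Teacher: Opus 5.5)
Your proposal is correct and matches the paper's proof. You identify the three C3 units as the attack instances $(x,j)=(15,5),(15,6),(16,3)$ of $\OG(M)$, note that every dyadic scale $M=2^{2t-1}$ with $t\ge 4$ satisfies $M\ge 128$ and $8\mid M$ (so Theorem~\ref{thm:c3core} makes $\OG(M)$ infeasible at all of them), and then invoke Theorem~\ref{thm:ogred}, with the ``in particular'' clause and the $\N_0$ remark handled as in the statement and Remark~\ref{rem:lean}.
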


\begin{proof}
Every scale of the dyadic family satisfies $M = 2^{2t-1} \equiv 0
\pmod 8$ and $M \ge 128$ for $t \ge 4$. By Theorem~\ref{thm:c3core}
below, the C3 core --- hence a fortiori the full gadget $\OG(M)$, whose
attack list (Definition~\ref{def:og}(ii)) contains the three axioms of
$\mathrm{C3}$ as the instances $(x, j) = (15, 5), (15, 6), (16, 3)$,
and whose constraint (i) is AP-freeness
--- is infeasible at every such scale. Theorem~\ref{thm:ogred} applies.
\end{proof}

The remainder of this section proves the core statement:

\begin{theorem}[C3 core]\label{thm:c3core}
For every $M \equiv 0 \pmod 8$ with $M \ge 16$, no AP-free linear order
of the interval $(M, 2M]$ satisfies the three axioms
\[
\mathrm{C3} = \{\, A_1\colon t_5 \prec b_5,\quad
A_2\colon t_3 \prec b_6,\quad A_3\colon t_{10} \prec b_3 \,\}.
\]
\end{theorem}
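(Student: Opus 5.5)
The three axioms are $t_5=2M-5\prec b_5=M+5$, $t_3=2M-3\prec b_6=M+6$ and $t_{10}=2M-10\prec b_3=M+3$. The plan is to propagate these three precedences through chains of forced relations until two chains meet in a contradiction. The basic engine is the zigzag lemma on an arithmetic-progression ladder $x, x+d, x+2d,\dots$ inside $(M,2M]$. If $x\prec x+d$, then AP-freeness of $(x,x+d,x+2d)$ forbids $x+d\prec x+2d$ once $x\prec x+d$ holds. It therefore forces $x+2d\prec x+d$, and the direction of successive comparisons alternates along the ladder. I would state this as a bounded-ladder lemma, quantitative at the boundary: an orientation of one rung determines all rungs up to the ends of the interval. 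The parity of the number of rungs then fixes the orientation at the far end.

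Each axiom is a long-range comparison across the block, with difference $d=M-10$, $M-9$ or $M-13$. The first step is to convert each one into a local statement near the block centre $3M/2$. The pairs are roughly symmetric about $3M/2$, so their midpoints lie near $3M/2$. For example, $b_5$ and $t_5$ have midpoint exactly $3M/2$, which is an integer since $8\mid M$. The AP $(b_5,3M/2,t_5)$, together with $t_5\prec b_5$, forces $3M/2$ not to lie between them in the monotone sense. This gives a dichotomy on where $3M/2$ sits. Similarly $(b_6,\,3M/2+\tfrac32,\,t_3)$ has midpoint $3M/2+3/2$, which is not an integer, so $A_2$ instead has to be used through ladders of even difference. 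Among these, $(b_3,\,3M/2-7/2,\,t_{10})$ is again non-integral. This suggests treating $A_2$ and $A_3$ together, using ladders with step $2$ or $4$ anchored at $b_5, b_6$ and $t_3, t_5, t_{10}$.

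The core mechanism I would aim for is a \emph{transfer lock}. Take the two APs $(b_5, b_6, b_7)$ and $(t_5, t_4, t_3)$ and their translates. These show that $A_1$ and $A_2$ together force a fixed orientation, say $b_6\prec b_5$ or its reverse, on the short bottom ladder and on the short top ladder. Zigzag propagation then carries that orientation along the step-$1$ or step-$2$ ladders from the bottoms up towards $3M/2$, and from the tops down towards $3M/2$. After that comes a \emph{mirror-flood induction}. The reflection $v\mapsto 3M-v$ maps $(M,2M]$ to itself and preserves APs. Using it, I would prove by induction on $k$ that the comparison between $M+k$ and $2M-k$, or between suitable neighbours, has a fixed orientation for all $k$ up to about $M/2$. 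The base case comes from $A_1$ and $A_3$, and the inductive step uses the AP through the pair and the already-flooded centre elements.

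The residue $M\equiv 0\pmod 8$ should enter only at the two flood centres $3M/2$ and $3M/2\pm 2$, or at $3M/4$-type points appearing as midpoints. There the two floods arrive with parities determined by $M/8$, and the final step is to check that they arrive with opposite orientations at the same pair. That check is a single AP, probably $(3M/2-4,\,3M/2,\,3M/2+4)$ or one with difference $M/4$, which produces a monotone triple.

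I expect the main obstacle to be the flood induction step. It must hold uniformly for $k$ from $O(1)$ up to $\sim M/2$, and it must close without case splits that depend on unforced comparisons. My first attempt would be to carry a two-pair invariant, the orientation of $(M+k,2M-k)$ together with that of $(M+k+1,2M-k-1)$, and to show that the APs $(M+k,\,M+k+1,\,M+k+2)$ and their mirrors force the next pair. If that invariant is not closed, I would enlarge it to include the step-$2$ ladder orientations, as the transfer lock suggests.
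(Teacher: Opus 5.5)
You have the right engine (zigzag on AP ladders) and the right first split. $A_1$ relates the two \emph{endpoints} of $(b_5,m_0,t_5)$, so it yields exactly the dichotomy on where $m_0=3M/2$ sits. But the two load-bearing steps you describe do not go through, and no contradiction is actually derived.

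The rules R1--R4 fire only from a relation between a midpoint and an endpoint of a common AP, and the axiom pairs barely lie in any APs. For $M\ge 24$ the pairs $(b_6,t_3)$ and $(b_3,t_{10})$ share no AP inside $(M,2M]$ at all: their sums $3M+3$ and $3M-7$ are odd, and their other completions are $15$, $16$ below the block and $3M-12$, $3M-23$ above it. The pair $(b_5,t_5)$ shares only $(b_5,m_0,t_5)$. So $A_2$ and $A_3$ can enter only through transitivity with relations derived elsewhere. Your claim that the short APs $(b_5,b_6,b_7)$, $(t_5,t_4,t_3)$ let $A_1\wedge A_2$ force an orientation on the bottom and top ladders therefore has no derivation.

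The orientation that is actually forced is $b_5\prec b_3$, and it is forced by $A_2\wedge A_3$, not $A_1\wedge A_2$. Proving it is itself a complete case-split contradiction argument (Theorem~\ref{thm:l1}). The genuine lock is that $b_3,b_5,t_5,t_3$ are the rungs $w_1,w_2,w_{M/2-3},w_{M/2-2}$ of the single odd $d=2$ ladder. Zigzag then ties $b_5\prec b_3$ to $t_3\prec t_5$ when $4\mid M$, and this pins the odd ladder's phase for the second stage, where $A_1$ is refuted.

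Your mirror-flood invariant (a fixed orientation of $(M+k,2M-k)$ for all $k$) likewise has nothing driving it. For $k<M/3$ the only in-block AP containing both $b_k$ and $t_k$ is $(b_k,m_0,t_k)$, which constrains $m_0$ and never the pair. The APs $(M+k,M+k+1,M+k+2)$ and their mirrors only fix the one-bit phase of the $d=1$ zigzag. Also, $v\mapsto 3M-v$ sends $2M$ to $M\notin(M,2M]$. The interval's reflection is $v\mapsto 3M+1-v$, which pairs $b_j$ with $t_{j-1}$ and does not preserve C3.

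The missing idea is to flood \emph{center-versus-class} relations rather than bottom-versus-top ones. Take a class $C=\{v\equiv r \pmod g\}$ with $g\in\{2,4\}$ and a center $c\equiv r+g/2 \pmod g$. Midpoint-extremality on $(c-e,c,c+e)$ carries a relation with $c$ across each mirror pair. A zigzag edge of $C$'s ladder plus transitivity then carries it from $e$ to $e\pm g$, in either phase of that ladder. So unknown phases are handled by a harmless case split, not avoided as you hoped.

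The paper proceeds in this order:
\begin{itemize}
\item It floods $m_0$ against all odd values, seeded by the $m_0$-versus-$t_5$ split.
\item It floods the odd centers $m_0\pm1$ against the evens, reaching $t_{10}$ and $b_6$, where $A_3$ and $A_2$ attach.
\item It floods the same centers against the mod-$4$ odd classes.
\item It closes into $3$-cycles in the first stage. In the second stage it closes into $2$-cycles via the APs $(b_3,m_0-1,t_5)$ and $(b_5,m_0+1,t_3)$.
\end{itemize}
The hypothesis $M\equiv 0\pmod 8$ enters exactly in which mod-$4$ class each of $m_0\pm1$ is a center for, and whether it is an odd-ladder leader. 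It does not enter at $m_0\pm2$, nor at $3M/4$-type points, which lie outside the block.
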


Here \emph{AP-free} means condition (i) of Definition~\ref{def:og}: no
arithmetic progression $a<b<c$ inside the block occurs in monotone
position order. The proof is a small toolkit of ladder lemmas
(\S\ref{sec:toolkit}) followed by two forcing theorems
(\S\ref{sec:l1}--\S\ref{sec:flip}). Every lemma application in the
proofs has been machine-verified step by step at every
$M \equiv 0 \pmod 4$ (resp.\ $\equiv 0 \pmod 8$) in $[12, 400]$ plus
$M = 512, 1024$, and independently cross-validated; see
Remark~\ref{rem:c3machine}.

\subsection{The ladder toolkit}\label{sec:toolkit}

AP-freeness is the \emph{midpoint-extremal rule}: on every in-block AP
$(a, b, c)$, $c = 2b - a$, the midpoint $b$ either precedes both
endpoints or follows both. We use it as four unit rules (plus
transitivity throughout):
\[
\text{R1: } a \prec b \Rightarrow c \prec b, \qquad
\text{R3: } c \prec b \Rightarrow a \prec b, \qquad
\text{R2: } b \prec c \Rightarrow b \prec a, \qquad
\text{R4: } b \prec a \Rightarrow b \prec c.
\]
Write $m_0 = 3M/2$ (the arithmetic midpoint of the block: $b_j + t_j =
3M = 2m_0$ for all $j$, so every pair $(b_j, t_j)$ mirrors through
$m_0$). A \emph{$d$-ladder} is a maximal run $w_0, w_1, \dots$ of
values of $(M, 2M]$ in arithmetic progression with difference $d$; the
$d = 2$ ladders are the two parity classes, and the $d = 4$ ladders on
odd values are \emph{class A} $= \{v \equiv M + 1 \bmod 4\}$ and
\emph{class B} $= \{v \equiv M + 3 \bmod 4\}$.

\begin{lemma}[Zigzag]\label{lem:zigzag}
Let $\prec$ be an AP-free linear order of an interval containing the
$d$-ladder $w_0, \dots, w_r$. Suppose $w_e \prec w_{e'}$ for some
$|e - e'| = 1$. Then every rung $w_i$ with $i \equiv e \pmod 2$ precedes
each of its existing neighbors.
\end{lemma}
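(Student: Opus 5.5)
The plan is to propagate the given inequality one step at a time along the ladder in both directions, using only the midpoint-extremal rules R1--R4 on the consecutive triples $(w_{i-1}, w_i, w_{i+1})$. These are genuine in-interval APs with difference $d$, so AP-freeness applies to each of them. Without loss of generality assume $e' = e+1$; the case $e' = e-1$ is the mirror image obtained by reversing the indexing of the ladder, which preserves the ladder structure and the rules (R1$\leftrightarrow$R3, R2$\leftrightarrow$R4 swap roles).

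The key single-step claim is the following: if $w_i \prec w_{i+1}$, then $w_{i+2} \prec w_{i+1}$ whenever $w_{i+2}$ exists, and symmetrically if $w_i \prec w_{i-1}$ then $w_i \prec w_{i+1}$. Both are instances of the midpoint rule on the triple $(w_i, w_{i+1}, w_{i+2})$ (resp.\ $(w_{i-1}, w_i, w_{i+1})$).
\begin{itemize}
\item In the first case $w_i \prec w_{i+1}$ says that the midpoint does not precede the endpoint $w_i$, so by R1 it must follow both endpoints, giving $w_{i+2} \prec w_{i+1}$.
\item In the second case the midpoint $w_i$ precedes one endpoint, so by R4 (or R2) it precedes both.
\end{itemize}
Thus each relation between adjacent rungs determines the relation on the next adjacent pair, with the direction alternating. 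The same holds moving leftward: from $w_i \prec w_{i+1}$ we get $w_i \prec w_{i-1}$ by R2 applied to the triple centred at $w_i$.

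Then I run an induction on the distance from the seed pair $(w_e, w_{e+1})$. The claim is that for every adjacent pair $(w_k, w_{k+1})$ in the ladder, the rung among $w_k, w_{k+1}$ whose index is $\equiv e \pmod 2$ precedes the other. The base case is the hypothesis. The inductive step, in either direction, is exactly the single-step claim, because the rung of parity $e$ switches between the left and right member of the pair at each step, and the midpoint rule forces precisely that alternation. Finally, "every rung $w_i$ with $i \equiv e$ precedes each of its existing neighbors" is just the conjunction of the statements for the pairs $(w_{i-1}, w_i)$ and $(w_i, w_{i+1})$ when those exist; the endpoints $w_0, w_r$ have only one neighbour, which is why the statement says "existing".

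There is no genuine obstacle; the only care needed is bookkeeping. First, every triple used must lie inside the interval, which holds because all rungs do. Second, transitivity is not needed here at all, only the local rules. Third, the parity labelling must be stated so that the induction works uniformly in both directions from the seed.
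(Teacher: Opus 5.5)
Your proof is correct and is essentially the paper's argument: both propagate along consecutive-rung triples using R1 and R4 going up the ladder and the mirrored rules (R2, R3) going down, with no transitivity needed. The only difference is bookkeeping: the paper inducts on leader status two rungs at a time ($w_i$ leads $\Rightarrow$ $w_{i+2}$ leads), whereas you track the orientation of each adjacent pair one step at a time.
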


\begin{proof}
Any three consecutive rungs form an AP with the middle rung as
midpoint, so each rung either leads or trails both neighbors. The seed
makes $w_e$ a leader. If $w_i$ leads, then from $w_i \prec w_{i+1}$,
R1 on $(w_i, w_{i+1}, w_{i+2})$ gives $w_{i+2} \prec w_{i+1}$, and R4
on $(w_{i+1}, w_{i+2}, w_{i+3})$ gives $w_{i+2} \prec w_{i+3}$: $w_{i+2}$
leads. Downward is the mirror argument.
\end{proof}

\begin{lemma}[Phase dichotomy]\label{lem:phase}
Let $\prec$ be an AP-free linear order of a $d$-ladder $w_0, \dots, w_r$
with $r \ge 1$. Then the ladder is globally in exactly one of its two
\emph{zigzag phases}: either all even-index rungs lead their existing
neighbors, or all odd-index rungs do. Consequently the \emph{leader set} of the
ladder is one of its two half-classes modulo $2d$, adjacent rungs
strictly alternate leader/trailer, and a proof may case-split on the
phase and detach any conclusion derived in both branches.
\end{lemma}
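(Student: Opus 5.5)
This is a direct consequence of Lemma~\ref{lem:zigzag}, applied once in each phase. The plan has three steps: show that every rung is either a leader or a trailer, show that a single leader fixes the phase, and then read off the remaining assertions.

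\emph{Leader or trailer.} If $r = 1$, the two rungs are comparable. Exactly one of them precedes the other, and that one leads its only neighbor. Every rung is therefore a leader or a trailer, and exactly one of the two is a leader.

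If $r \ge 2$, each interior rung $w_i$ with $0<i<r$ is the midpoint of the AP $(w_{i-1},w_i,w_{i+1})$. By the midpoint-extremal rule, $w_i$ precedes both neighbors or follows both. So every interior rung is a leader or a trailer. The endpoints $w_0$ and $w_r$ each have a single neighbor, so each of them is trivially one or the other.

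\emph{One leader fixes the phase.} Pick any adjacent pair $w_e, w_{e+1}$; since $\prec$ is linear, one of them precedes the other. Suppose $w_{e'}\prec w_{e''}$ with $|e'-e''|=1$. Lemma~\ref{lem:zigzag}, seeded at this pair, shows that every rung of index $\equiv e' \pmod 2$ leads all its existing neighbors. Each neighbor of such a rung has the opposite parity and is preceded by it. An odd-parity rung $w_i$ with $i\not\equiv e'$ has all of its existing neighbors of parity $e'$, and each of them precedes $w_i$. Hence every rung of that parity is a trailer.

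\emph{Reading off the conclusions.} Exactly one parity class is the leader class, which gives the phase dichotomy. Exclusivity holds because $r\ge1$: the pair $w_0,w_1$ cannot both lead each other. Adjacent rungs have opposite parity, so they strictly alternate leader and trailer. Index parity $i \bmod 2$ corresponds to $w_i \equiv w_0 + id \pmod{2d}$, so the leader set is one of the two half-classes modulo $2d$. The case-split remark then follows: the two phases exhaust all possibilities, so any conclusion derived in both branches holds outright.

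No step is a real obstacle. The only care needed is with endpoints, where the lemma's phrase ``existing neighbors'' handles the missing side, and with the small case $r=1$, which is checked directly.
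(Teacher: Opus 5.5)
Your proof is correct and follows the paper's route: the paper orients the adjacent pair $(w_0,w_1)$ and applies Lemma~\ref{lem:zigzag} to that seed. Your extra steps spell out the ``consequently'' clauses the paper leaves implicit: trailer status for the other parity class (your ``odd-parity rung'' should read ``other-parity rung''), exclusivity via the pair $w_0,w_1$, and the translation of index parity into residues modulo $2d$. The preliminary ``leader or trailer'' paragraph is redundant once the zigzag lemma is invoked, but harmless.
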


\begin{proof}
The order orients the adjacent pair $(w_0, w_1)$ one way or the other;
apply Lemma~\ref{lem:zigzag} to that seed.
\end{proof}

\begin{lemma}[Transfer lock]\label{lem:transfer}
Let $M$ be even, $M \ge 12$, and let $\prec$ be an AP-free linear order
of $(M, 2M]$ (below $M = 12$ the four rungs named here
collide, e.g.\ $t_5 = b_3$ at $M = 8$). On the odd $d=2$ ladder $w_i = M + 1 + 2i$
($0 \le i \le M/2 - 1$) the four odd C3 values sit at $b_3 = w_1$,
$b_5 = w_2$, $t_5 = w_{M/2-3}$, $t_3 = w_{M/2-2}$, and
Lemma~\ref{lem:zigzag} locks the pair orientations together:
\[
M \equiv 0 \ (\mathrm{mod}\ 4)\colon\quad b_5 \prec b_3
\iff t_3 \prec t_5;
\qquad
M \equiv 2 \ (\mathrm{mod}\ 4)\colon\quad b_5 \prec b_3 \iff t_5 \prec t_3.
\]
\end{lemma}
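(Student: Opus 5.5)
The plan is to read both equivalences directly off the phase dichotomy (Lemma~\ref{lem:phase}) applied to the odd $d=2$ ladder. Everything reduces to bookkeeping of indices modulo $2$.

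First I will check the index identifications. On $w_i = M+1+2i$ we have
\[
b_3 = M+3 = w_1,\qquad b_5 = M+5 = w_2,
\]
and solving $2M-5 = M+1+2i$ and $2M-3 = M+1+2i$ gives
\[
t_5 = w_{M/2-3},\qquad t_3 = w_{M/2-2},
\]
which are integers since $M$ is even. The ladder $w_0,\dots,w_{M/2-1}$ is the full set of odd values of $(M,2M]$, and it is a maximal $2$-ladder. The hypothesis $M \ge 12$ gives $M/2-3 \ge 3$, so the four rungs $w_1,w_2,w_{M/2-3},w_{M/2-2}$ are pairwise distinct. The two relevant pairs are each adjacent: $(w_1,w_2)$ at the bottom and $(w_{M/2-3},w_{M/2-2})$ at the top. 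The ladder also has $r = M/2-1 \ge 1$, so Lemma~\ref{lem:phase} applies.

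By Lemma~\ref{lem:phase}, the leader set is exactly one index-parity class, and every leader precedes each of its existing neighbours. Suppose $b_5 \prec b_3$, i.e.\ $w_2 \prec w_1$. Then $w_2$ is a leader (a trailer would follow $w_1$), so the even-index rungs lead. In the opposite case $b_3 \prec b_5$, the odd-index rungs lead. Since the orientation of the adjacent pair $(w_{M/2-3},w_{M/2-2})$ is decided by which of its two indices is even, the top orientation is a function of the bottom one.

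It then remains to do the case split on $M \bmod 4$.
\begin{itemize}
\item If $M \equiv 0 \pmod 4$, then $M/2-2$ is even. So in the even-leader phase $t_3 = w_{M/2-2}$ leads its neighbour $t_5$, giving $t_3 \prec t_5$. In the odd-leader phase $t_5$ leads, giving $t_5 \prec t_3$.
\item If $M \equiv 2 \pmod 4$, then $M/2-3$ is even, and the roles of $t_3$ and $t_5$ swap.
\end{itemize}

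Both biconditionals follow because each side is a dichotomy on the same phase: the left side holds exactly in the even-leader phase, and so does the right side. I do not expect a genuine obstacle. The only care needed is to confirm the adjacency of the top pair and the distinctness of the rungs, which is where $M \ge 12$ enters. The degeneracy at $M=8$ mentioned in the statement is exactly the failure $M/2-3 = 1$, where $t_5 = w_1 = b_3$.
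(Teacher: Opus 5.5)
Your proposal is correct and follows essentially the same route as the paper. You locate the four rungs on the odd ladder, let the orientation of the bottom adjacent pair $(w_1,w_2)$ fix the zigzag phase, and read off the top pair's orientation from the parity of $M/2-2$; the only cosmetic difference is that you get both directions at once from the phase dichotomy (Lemma~\ref{lem:phase}), whereas the paper seeds Lemma~\ref{lem:zigzag} separately from each orientation.
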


\begin{proof}
Each orientation of an adjacent pair seeds Lemma~\ref{lem:zigzag}; read
off whether the other pair's left member is a leader. At
$M \equiv 0 \pmod 4$: $b_5 \prec b_3$ (i.e.\ $w_2 \prec w_1$) makes
even indices leaders, and $M/2 - 2$ is even, so $t_3 = w_{M/2-2}$
leads its neighbor $t_5$. Conversely $t_3 \prec t_5$ seeds even-index
leaders and $w_2 = b_5$ leads $w_1 = b_3$. The reverse orientations
force the reverse conclusions, giving the biconditional; at
$M \equiv 2 \pmod 4$ the parity of $M/2$ shifts the lock by one rung.
\end{proof}

\begin{lemma}[Flood]\label{lem:flood}
Let $\prec$ be an AP-free linear order of $(M, 2M]$. Let $C$ be the
residue class $r \bmod g$ inside $(M, 2M]$, where $g \in \{2, 4\}$ (so
$C$ is a parity class for $g = 2$, and one of the two odd mod-$4$
classes for $g = 4$), and suppose the
$d = g$ ladder of $C$ is in a definite zigzag phase with leader set $L$. Let
$c \in (M, 2M]$ be a \emph{center} with $c \equiv r + g/2 \pmod g$, so
that the mirror pairs $(c - e,\, c,\, c + e)$ with $e \equiv g/2
\pmod g$ have both members in $C$ and form APs with midpoint $c$; call
$e$ \emph{admissible} when both $c \pm e \in (M, 2M]$. Suppose some
admissible $e_0$ carries a seed relation between $c$ and a member
$v \in \{c \pm e_0\}$. Then:
\begin{itemize}
\item (outward) if $c \prec v$, then $c \prec w$ for \emph{every}
$w \in C$ with $2c - w \in (M, 2M]$;
\item (inward) if $v \prec c$, then $w \prec c$ for every such $w$.
\end{itemize}
Moreover the conclusion holds in both zigzag phases of the $C$-ladder
(\emph{phase-blindness}), so by Lemma~\ref{lem:phase} it may be
detached after a two-branch case split whenever the seed is available
in both branches.
\end{lemma}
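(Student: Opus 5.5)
The plan is a step-by-step propagation along the admissible offsets $e$. At each step I combine the midpoint-extremal rule on the mirror AP $(c-e, c, c+e)$ with the leader/trailer alternation that Lemma~\ref{lem:phase} gives on the $C$-ladder. The set of $w \in C$ with $2c - w \in (M,2M]$ is exactly $\{c \pm e : e \text{ admissible}\}$. The admissible $e$ (with $e \equiv g/2 \pmod g$, $e > 0$) form a contiguous run in steps of $g$, since $c \pm e \in (M,2M]$ is an interval condition on $e$. So it suffices to show the following: if the seed relation holds at one admissible $e$, it holds at the neighbouring admissible offsets $e \pm g$. Induction in both directions from $e_0$ then covers every admissible $e$.

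The first step is to fix the parity bookkeeping. Since $e \equiv g/2 \pmod g$, we have $2e \equiv g \pmod{2g}$. Hence $c+e$ and $c-e$ lie in \emph{opposite} half-classes of $C$ modulo $2g$, and by Lemma~\ref{lem:phase} exactly one of them is a leader and the other a trailer. This holds in either phase; only the identity of the leader depends on the phase. Also, by the midpoint-extremal rule on $(c-e, c, c+e)$, the relation of $c$ to one member of the pair transfers to the other. So the seed at $e_0$ yields $c \prec c \pm e_0$ in the outward case, and $c \pm e_0 \prec c$ in the inward case.

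Outward step. Suppose $c \prec c+e$ and $c \prec c-e$, and let $\ell \in \{c \pm e\}$ be the leader. Take a new offset $e' = e \pm g$ that is admissible. Then $\ell$ has a ladder neighbour $\ell' \in \{c \pm e'\}$, namely $\ell$ moved by $g$ away from or toward $c$ on its own side, and $\ell'$ lies in the block because $e'$ is admissible. Since $\ell$ leads its neighbours, $\ell \prec \ell'$, so transitivity gives $c \prec \ell \prec \ell'$. The mirror AP $(c-e', c, c+e')$ then forces $c$ to precede the other member as well.

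Inward step. This is the dual argument using the trailer $\tau \in \{c \pm e\}$. We have $\tau \prec c$, and $\tau$'s neighbour $\tau'$ in the direction of $e'$ leads it, so $\tau' \prec \tau \prec c$. The mirror AP at $e'$ again transfers the relation to the partner.

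Neither step used which half-class leads, only that one member of each mirror pair leads and the other trails. This gives phase-blindness, so the conclusion can be detached after the case split of Lemma~\ref{lem:phase}.

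The only point needing care is the boundary bookkeeping, which I expect to be the main (if mild) obstacle. The neighbour $\ell'$ or $\tau'$ must be exactly $c \pm e'$, not a rung outside the block or outside the mirror range. This holds because consecutive rungs of the $d=g$ ladder differ by $g$, and $e'$ was chosen admissible. For $g=2$ one also checks that $c$ has the opposite parity to $C$, so $c \notin C$ and no mirror pair degenerates.
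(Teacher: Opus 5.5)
Your proposal is correct and follows essentially the same route as the paper. You transfer the seed through the mirror AP. You then induct in both directions over the contiguous run of admissible offsets, using that exactly one member of each mirror pair is a leader in either phase.

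Your uniform choice, the leader of the already-flooded pair for outward steps and its trailer for inward steps, selects exactly the same rungs as the paper's four separately stated sub-cases ($e\to e\pm g$, outward and inward). It is just phrased more compactly.
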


\begin{proof}
First, at each admissible $e$ exactly one of $c + e$, $c - e$ is a
leader: they differ by $2e \equiv g \pmod{2g}$, so they lie in the two
different half-classes mod $2g$ of $C$, and $L$ is one of them ---
whichever phase holds. Second, in a zigzag, adjacent rungs alternate
and each leader leads both its neighbors.

\emph{Seed.} From the relation at $v$, the mirror rule on the AP
$(c - e_0, c, c + e_0)$ (R2/R4 outward, R1/R3 inward) gives the
same-direction relation at the other member: both members of pair
$e_0$ are related to $c$ in the flood direction.

\emph{Outward, } $e \to e + g$: let $x \in \{c \pm e\}$ be the leader
of pair $e$. Its outward ladder-neighbor $x'$ (same side,
$|x' - c| = e + g$) satisfies $x \prec x'$; with $c \prec x$ known,
transitivity gives $c \prec x'$, and the mirror rule floods
$2c - x'$.

\emph{Outward, } $e \to e - g$: let $w \in \{c \pm (e - g)\}$ be the
trailer of pair $e - g$. Its outward neighbor $w'$ (distance $e$, same
side) is a leader and leads it: $w' \prec w$; with $c \prec w'$ known,
$c \prec w$, and the mirror rule floods the other member.

\emph{Inward, } $e \to e + g$: let $x$ be the leader of pair $e + g$;
it leads its inward neighbor $x_{\mathrm{in}}$ (distance $e$, same
side): $x \prec x_{\mathrm{in}} \prec c$, and the mirror rule floods
the other member. \emph{Inward, } $e \to e - g$: let $x$ be the leader
of pair $e - g$; it leads its outward neighbor (distance $e$):
$x \prec x_{\mathrm{out}} \prec c$; mirror floods the other member.

The admissible $e$ form an interval of the residue class $g/2 \bmod g$,
so the two inductions from $e_0$ reach every admissible $e$. Every step
used only R1--R4 on genuine APs, transitivity, and zigzag edges of the
given phase; the choice of side at each step is forced by the phase but
\emph{exists} in both phases.
\end{proof}

Three instances of Lemma~\ref{lem:flood} are used below.
\textbf{POLAR} ($g = 2$, center $m_0$,
class the odds; $M \equiv 0 \bmod 4$ makes $m_0$ even): seeded by the
comparison of $m_0$ with $t_5$ (mirror $b_5$), it floods $m_0$ against
\emph{all} odd values (the mirror of $b_l$ is $t_l$). The odd ladder's
phase is pinned by the ambient hypotheses, so no dichotomy is needed.
\textbf{P2-floods} ($g = 2$, an odd center $c$ near $m_0$, class the
evens): seeded by $c$ vs.\ $m_0$ at $e_0 = |c - m_0|$ (supplied by
POLAR); the even ladder's phase is unknown --- phase dichotomy, both
branches. \textbf{G4-floods} ($g = 4$, class A or B, center $c \equiv r +
2 \pmod 4$ for that class's residue $r$): seeded at $e = 2$ by the odd ladder's zigzag
edges between $c$ and $c \pm 2$; the $d = 4$ ladder's phase is unknown
--- phase dichotomy, both branches.

The \emph{mod-8 lock} of the whole problem sits in the center condition
of the G4-floods: at $M \equiv 0 \pmod 8$ the two odd neighbors of
$m_0$ satisfy $m_0 - 1 \equiv 3$ and $m_0 + 1 \equiv 1 \pmod 4$, so
$m_0 - 1$ is a G4-center for class A and $m_0 + 1$ for class B. At
$M \equiv 4 \pmod 8$ both congruences reverse: the two centers still
exist, but with their class roles \emph{swapped}, and the odd-ladder
leader statuses at $m_0 \pm 1$ invert as well. Theorem~\ref{thm:l1}
below survives this swap --- it is stated and proved for all
$M \equiv 0 \pmod 4$, absorbing the swap into the definition of the
centers $c^{*}, c^{**}$ --- whereas Theorem~\ref{thm:flip} does not: there
the two required seed sets cease to exist simultaneously, and the argument
fails. This is consistent with the computational observation that
(i)${}\wedge{}\mathrm{C3}$ is satisfiable at every swept
$M \equiv 4 \pmod 8$ (Proposition~\ref{prop:cores}). The mod-$8$ hypothesis of
Theorem~\ref{thm:c3core} therefore enters only through
Theorem~\ref{thm:flip}.

\subsection{Layer 1: two axioms force the odd-pair orientations}
\label{sec:l1}

\begin{theorem}[L1]\label{thm:l1}
Let $M \equiv 0 \pmod 4$, $M \ge 12$, and let $\prec$ be an AP-free
order of $(M, 2M]$ satisfying $A_2\colon t_3 \prec b_6$ and
$A_3\colon t_{10} \prec b_3$. Then $b_5 \prec b_3$ and $t_3 \prec t_5$.
\end{theorem}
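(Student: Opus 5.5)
By Lemma~\ref{lem:transfer}, at $M \equiv 0 \pmod 4$ the two conclusions $b_5 \prec b_3$ and $t_3 \prec t_5$ are equivalent. So it suffices to refute the opposite orientation. I argue by contradiction and assume $b_3 \prec b_5$, hence also $t_5 \prec t_3$. By Lemma~\ref{lem:zigzag} this pins the phase of the odd $d=2$ ladder $w_i = M+1+2i$: the odd-index rungs lead. Concretely, $b_3 = w_1$ leads $b_1$ and $b_5$, and $t_5 = w_{M/2-3}$ leads $t_7$ and $t_3$. All odd-ladder edges, including those at $m_0 \pm 1$, are then determined. The contradiction must come from moving the information carried by the two cross-parity axioms $A_2, A_3$ onto the odd ladder. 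This needs a hub, because $t_3,b_6$ and $t_{10},b_3$ are mixed-parity pairs with odd sums: no in-block AP contains either pair as endpoints, and their other completions leave the block.

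\emph{Step 1 (POLAR).} With the odd phase fixed, I apply Lemma~\ref{lem:flood} with $g=2$ and center $m_0$, which is even since $M \equiv 0 \pmod 4$. It is seeded by the comparison of $m_0$ with $t_5$, whose mirror is $b_5$. I case-split on the direction of that comparison.
\begin{itemize}
\item In the outward case, $m_0$ precedes every odd value. In particular $m_0 \prec b_3$ and $m_0 \prec t_3$.
\item In the inward case, every odd value precedes $m_0$.
\end{itemize}
This converts the odd endpoints $t_3, b_3$ of $A_2, A_3$ into comparisons with the single even hub $m_0$.

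\emph{Step 2 (P2-floods).} Next I transfer from $m_0$ to the even values $b_6$ and $t_{10}$. For this I use $g=2$ floods on the even class, centered at the odd neighbours $c = m_0 \pm 1$. Each is seeded at $e_0 = 1$ by the comparison of $c$ with $m_0$, which POLAR supplies, since $c$ is odd. Because the even ladder's phase is unknown, I run both branches of Lemma~\ref{lem:phase} and rely on the phase-blindness of Lemma~\ref{lem:flood}. Each such flood orients $c$ against every even $w$ with $2c - w$ in the block; this includes $b_6$ and $t_{10}$, whose mirrors through $m_0 \pm 1$ are $t_4$ or $t_8$, respectively $b_8$ or $b_{12}$.

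\emph{Step 3 (closing the chain).} Combining Steps 1 and 2 with the fixed odd-ladder edges at $m_0 \pm 1$, I aim for chains
\[
b_6 \prec c \prec (\text{odd value}) \prec t_3
\qquad\text{or}\qquad
b_3 \prec \cdots \prec t_{10}
\]
in each POLAR case. Such a chain contradicts $A_2$ or $A_3$ respectively. Where a single flood does not reach far enough, I would additionally use the G4-floods on classes A and B centered at $m_0 \mp 1$, seeded at $e=2$ by the now-known odd-ladder edges between $c$ and $c \pm 2$. These refine the order among the odd values near $b_3, b_5$ and $t_3, t_5$.

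The main obstacle is Step 3. The directions delivered by POLAR and by the P2-floods must line up against $A_2$ in one POLAR case and against $A_3$ in the other. I expect each axiom to kill exactly one case, which is why both $A_2$ and $A_3$ are hypotheses. Verifying this alignment requires tracking which of $m_0 \pm 1$ is a leader under the assumed odd phase, and that is where $M \equiv 0 \pmod 4$ enters. I would first check the bookkeeping at the concrete scale $M = 16$ and then carry out the same chain parametrically.
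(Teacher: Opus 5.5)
Your setup matches the paper's proof. You reduce by Lemma~\ref{lem:transfer} to refuting $b_3 \prec b_5$, read off from Lemma~\ref{lem:zigzag} that the odd-ladder leaders are the values $\equiv 3 \pmod 4$, split on $(m_0,t_5)$ with POLAR, and run P2-floods at $m_0 \pm 1$. But Step~3, which you yourself flag as the main obstacle, is the whole proof, and you leave it as an expectation. As written, the proposal does not prove the theorem.

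You also misjudge the G4-floods. They are not a fallback ``where a single flood does not reach far enough,'' and they do not merely refine the order near $b_3,b_5$ and $t_3,t_5$. They are the only link between an odd center $c$ and the far odd endpoint $b_3$ or $t_3$:
\begin{itemize}
\item POLAR compares odd values only with $m_0$, and in each case it puts $c$ and that endpoint on the same side of $m_0$, so transitivity through $m_0$ gives nothing.
\item The zigzag gives only adjacent-rung edges, and no two edges of one ladder chain.
\item The G4-flood at $c$ reaches the endpoint directly, because $c - b_3$ or $t_3 - c$ is $\equiv 2 \pmod 4$ for the right choice of $c$.
\end{itemize}

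The missing step is short once you track directions. In the inward case ($t_5 \prec m_0$), the P2-flood at an odd $c$ is outward and gives $c \prec t_{10}$. So $A_3$ must kill this case, and you need $b_3 \prec c$. In the outward case, the P2-flood is inward and gives $b_6 \prec c$. So $A_2$ kills it once you have $c \prec t_3$.
\begin{itemize}
\item For the inward case, take $c^* \in \{m_0 \pm 1\}$ with $c^* \equiv 1 \pmod 4$. It is a trailer led by $c^* \pm 2$. This seeds the G4-inward flood over class B and gives $b_3 \prec c^*$ (mirror $t_1$ or $t_5$). That closes the cycle $b_3 \prec c^* \prec t_{10} \prec b_3$.
\item For the outward case, take $c^{**} \equiv 3 \pmod 4$. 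It is itself a leader. This seeds the G4-outward flood over class A and gives $c^{**} \prec t_3$ (mirror $b_1$ or $b_5$). That closes $c^{**} \prec t_3 \prec b_6 \prec c^{**}$.
\end{itemize}

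Note that $c^* = m_0 + 1$ when $M \equiv 0 \pmod 8$ but $c^* = m_0 - 1$ when $M \equiv 4 \pmod 8$. So your plan to work the chain out at $M = 16$ and then repeat ``the same chain'' parametrically would fail on half of the class $M \equiv 0 \pmod 4$ unless you swap the two centers.
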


\begin{proof}
It suffices to refute $S\colon b_3 \prec b_5$; given $b_5 \prec b_3$,
Lemma~\ref{lem:transfer} supplies $t_3 \prec t_5$. Assume $A_2, A_3,
S$. By Lemma~\ref{lem:zigzag} (seed $S$) the odd ladder's leaders are
the offsets $\equiv 3 \pmod 4$. Define
\[
c^{*} =
\begin{cases} m_0 + 1, & M \equiv 0 \ (\mathrm{mod}\ 8)\\
m_0 - 1, & M \equiv 4 \ (\mathrm{mod}\ 8)\end{cases}
\qquad
c^{**} =
\begin{cases} m_0 - 1, & M \equiv 0 \ (\mathrm{mod}\ 8)\\
m_0 + 1, & M \equiv 4 \ (\mathrm{mod}\ 8)\end{cases}
\]
so that $c^{*} \equiv 1 \pmod 4$ is a G4-center for class B whose odd
neighbors $c^{*} \pm 2$ ($\equiv 3 \bmod 4$) are odd-ladder leaders,
and $c^{**} \equiv 3 \pmod 4$ is a G4-center for class A and is itself
an odd-ladder leader. Split on the comparison $(m_0, t_5)$.

\emph{Case I: $t_5 \prec m_0$.} POLAR-inward: every odd value
$\prec m_0$. Then:
(1) $b_3 \prec c^{*}$ by the G4-inward flood at $c^{*}$ over class B
--- seeds $c^{*} \pm 2 \prec c^{*}$ (the leaders $c^{*} \pm 2$ lead
their trailing neighbor $c^{*}$); $b_3 \in$ class B with mirror
$2c^{*} - b_3 \in \{t_1, t_5\}$ in the block; both $d{=}4$ phases.
(2) $c^{*} \prec t_{10}$ by the P2-outward flood at $c^{*}$ over the
evens --- seed $c^{*} \prec m_0$ (POLAR, $e_0 = 1$); $t_{10}$ even
with mirror $2c^{*} - t_{10} \in \{M{+}8, M{+}12\}$ in the block; both
even phases.
(3) $A_3$: $t_{10} \prec b_3$. Together: the $3$-cycle
$b_3 \prec c^{*} \prec t_{10} \prec b_3$, a contradiction.

\emph{Case II: $m_0 \prec t_5$.} POLAR-outward: $m_0 \prec$ every odd
value. Then:
(1) $c^{**} \prec t_3$ by the G4-outward flood at $c^{**}$ over class
A --- seeds $c^{**} \prec c^{**} \pm 2$ ($c^{**}$ is an odd-ladder
leader); $t_3 \in$ class A with mirror $2c^{**} - t_3 \in
\{b_1, b_5\}$; both phases.
(2) $b_6 \prec c^{**}$ by the P2-inward flood at $c^{**}$ over the
evens --- seed $m_0 \prec c^{**}$ (POLAR, $e_0 = 1$); $b_6$ even with
mirror $2c^{**} - b_6 \in \{2M{-}8, 2M{-}4\}$; both phases.
(3) $A_2$: $t_3 \prec b_6$. Together: the $3$-cycle
$c^{**} \prec t_3 \prec b_6 \prec c^{**}$, a contradiction.

Both cases are contradictory, so $S$ is impossible.
\end{proof}

Note that Case I consumes only $A_3$ and Case II only $A_2$; the
boundary arithmetic (mirrors in the block, seed admissibility) holds
down to $M = 12$.

\subsection{The flip: the third axiom is anti-forced}\label{sec:flip}

\begin{theorem}[FLIP]\label{thm:flip}
Let $M \equiv 0 \pmod 8$, $M \ge 16$, and let $\prec$ be an AP-free
order of $(M, 2M]$ satisfying $A_2$, $A_3$ and $b_5 \prec b_3$. Then
$t_5 \prec b_5$ is impossible: $b_5 \prec t_5$ ($= \lnot A_1$) is
forced.
\end{theorem}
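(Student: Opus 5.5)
The plan is to assume $A_1\colon t_5 \prec b_5$ on top of $A_2$, $A_3$ and $b_5 \prec b_3$, and derive a $3$-cycle, following the template of Theorem~\ref{thm:l1}. The argument would split on the orientation of $(m_0, t_5)$ and close each branch with a short chain of floods.

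First I fix the ambient data. The seed $b_5 \prec b_3$ is $w_2 \prec w_1$ on the odd ladder, so by Lemma~\ref{lem:zigzag} the odd-ladder leaders are the even-index rungs, i.e.\ the odd values $\equiv M+1 \equiv 1 \pmod 4$ (class A). Lemma~\ref{lem:transfer} then gives $t_3 \prec t_5$. Next I use $M \equiv 0 \pmod 8$. Then $m_0 = 3M/2 \equiv 0 \pmod 4$, so $m_0+1 \equiv 1$ and $m_0 - 1 \equiv 3 \pmod 4$. Hence $m_0+1$ is a G4-center for class B and is itself an odd-ladder leader, which supplies the outward seeds $m_0+1 \prec m_0+1\pm 2$. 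Likewise $m_0-1$ is a G4-center for class A, and its odd neighbours $m_0-1\pm2$ are leaders, which supplies the inward seeds $m_0-1\pm 2 \prec m_0-1$. So \emph{both} centers carry usable seeds simultaneously. This is exactly the point at which the mod-$8$ hypothesis is consumed; at $M \equiv 4 \pmod 8$ the leader statuses invert and one of the two seed sets disappears. I would also note the classes of the relevant values: $b_5, t_3 \in$ class A, and $t_5, b_3 \in$ class B.

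\emph{Case I: $t_5 \prec m_0$.} POLAR-inward gives every odd value $\prec m_0$. The G4-outward flood at $m_0+1$ over class B gives $m_0+1 \prec t_5$; here the mirror is $M+7$, in the block. The G4-inward flood at $m_0-1$ over class A gives $b_5 \prec m_0-1$, with mirror $2M-7$. Together with $A_1$ this yields the chain $m_0+1 \prec t_5 \prec b_5 \prec m_0-1$. To close it I need a reverse link $m_0-1 \prec m_0+1$ or a detour through the evens. The candidates are the P2-floods at $m_0\pm1$, which are seeded by POLAR at $e_0 = 1$ and valid in both even-ladder phases; by POLAR they put the odd center before the even values with mirrors in the block. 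I would then route back through $b_3$ or $b_6$ using $A_3\colon t_{10}\prec b_3$ and $b_5 \prec b_3$, exactly as in Theorem~\ref{thm:l1}, Case I.

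\emph{Case II: $m_0 \prec t_5$.} POLAR-outward gives $m_0 \prec$ every odd value. The P2-inward floods at $m_0 \pm 1$ then give $b_6 \prec m_0\pm1$. The G4 floods give $m_0+1 \prec t_5$ and $t_3, b_5 \prec m_0-1$. I would combine these with $A_2\colon t_3 \prec b_6$, $A_1$ and $t_3 \prec t_5$, looking for a cycle through $m_0+1$ or $m_0-1$.

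The main obstacle is that the chains above do not close on their own: in each case I get a long chain between the two centers, while the odd ladder only orients $m_0+1 \prec m_0-1$ (the former leads). The missing link must come from an additional flood at a center farther from $m_0$, for example $m_0 \pm 3$ or an even center, which would compare $t_5$ or $b_5$ with a value already placed on the other side. Finding that center, and checking that its mirrors and seeds remain in the block for all $M \ge 16$, is where I expect the real work to be. The first thing I would try is a G4 flood over class B centered at a class-A-adjacent point chosen so that $b_3$ and $t_5$ are mirrors, which would make $b_5 \prec b_3$ and $A_1$ directly incompatible.
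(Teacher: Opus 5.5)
\textbf{There is a genuine gap: neither case is closed.} Your setup is correct and matches the paper's:
\begin{itemize}
\item the odd ladder's leaders are class A, by the seed $b_5 \prec b_3$;
\item $m_0+1$ is a class-B G4-center that is itself a leader, and $m_0-1$ is a class-A G4-center with leader neighbours, which is where the mod-$8$ hypothesis enters;
\item the individual relations you derive are valid: $m_0+1 \prec t_5$ (mirror $b_7$), $b_5, t_3 \prec m_0-1$, and the P2-floods seeded by POLAR.
\end{itemize}
But, as you concede, the relations you collect contain no cycle. In Case~I you reach $m_0+1 \prec t_5 \prec b_5 \prec m_0-1 \prec t_{10} \prec b_3$. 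In Case~II you reach $t_3 \prec b_6 \prec m_0+1 \prec t_5 \prec b_5 \prec m_0-1$. Both are consistent with everything else you have, including $m_0+1 \prec m_0-1$ and $t_3 \prec t_5$. So the proposal does not prove the statement.

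\textbf{The missing idea} is not a flood at a more distant center. It is a single bare AP through the \emph{same} centers that pairs a bottom with a top of a different index:
\[
b_3 + t_5 = 3M-2 = 2(m_0-1), \qquad b_5 + t_3 = 3M+2 = 2(m_0+1).
\]

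\emph{Case~I.}
\begin{itemize}
\item The P2-outward flood at $m_0-1$ gives $m_0-1 \prec t_{10}$ (mirror $b_8$).
\item $A_3$ then gives $m_0-1 \prec b_3$.
\item R4 on $(b_3, m_0-1, t_5)$ transfers this to $m_0-1 \prec t_5$.
\item $A_1$ gives $m_0-1 \prec b_5$, a $2$-cycle with the G4-inward flood $b_5 \prec m_0-1$.
\end{itemize}

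\emph{Case~II.}
\begin{itemize}
\item The P2-inward flood at $m_0+1$ gives $b_6 \prec m_0+1$.
\item $A_2$ then gives $t_3 \prec m_0+1$.
\item R3 on $(b_5, m_0+1, t_3)$ gives $b_5 \prec m_0+1$.
\item $A_1$ gives $t_5 \prec m_0+1$, a $2$-cycle with the G4-outward flood $m_0+1 \prec t_5$.
\end{itemize}
Each case thus runs entirely at one center, and the contradiction is a $2$-cycle, not a $3$-cycle.

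\textbf{Why your suggested repairs would not work.} Your closing suggestion, a class-B G4-flood with $b_3, t_5$ as mirrors, is impossible. Their midpoint is forced to be $m_0-1 \equiv 3 \pmod 4$, which lies in class B itself rather than being a class-B center (those are $\equiv 1 \pmod 4$). Their half-gap is $M/2-4 \equiv 0 \pmod 4$, not $\equiv 2$. So $(b_3,t_5)$ is a mirror pair of no G4-flood. It must be used as one AP, with the comparison at its midpoint supplied by $A_3$ through the P2-flood; likewise $A_2$ supplies it for $(b_5, m_0+1, t_3)$. The hypothesis $b_5 \prec b_3$ is not what makes this link: it enters only by fixing the odd-ladder phase.

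Routing ``exactly as in Theorem~\ref{thm:l1}, Case~I'' also fails. With the phase flipped, the class-B flood at $m_0+1$ is outward and yields $m_0+1 \prec b_3$ (mirror $t_1$), which is consistent with $m_0+1 \prec t_{10} \prec b_3$ and closes nothing.
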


\begin{proof}
Assume $A_1\colon t_5 \prec b_5$ for contradiction. By
Lemma~\ref{lem:zigzag} (seed $b_5 \prec b_3$) the odd ladder's leaders
are the offsets $\equiv 1 \pmod 4$. Since $M \equiv 0 \pmod 8$:
$m_0 - 1 \equiv 3 \pmod 4$ is a G4-center for class A whose odd
neighbors $m_0 + 1$, $m_0 - 3$ (offsets $\equiv 1 \bmod 4$) are
odd-ladder leaders; $m_0 + 1 \equiv 1 \pmod 4$ is a G4-center for
class B and is itself an odd-ladder leader. Split on $(m_0, t_5)$.

\emph{Case I: $t_5 \prec m_0$.} POLAR-inward: every odd ${}\prec m_0$.
Then:
(1) $m_0 - 1 \prec t_{10}$ [P2-outward flood at $m_0 - 1$ over the
evens; seed $m_0 - 1 \prec m_0$ (POLAR, $e_0 = 1$); mirror of
$t_{10}$ is $M + 8$; both phases].
(2) $m_0 - 1 \prec b_3$ [(1) $+ A_3$].
(3) $m_0 - 1 \prec t_5$ [R4 on the AP $(b_3,\, m_0 - 1,\, t_5)$:
$b_3 + t_5 = 3M - 2 = 2(m_0 - 1)$].
(4) $m_0 - 1 \prec b_5$ [(3) $+ A_1$].
(5) $b_5 \prec m_0 - 1$ [G4-inward flood at $m_0 - 1$ over class A;
seeds $m_0 + 1 \prec m_0 - 1$ and $m_0 - 3 \prec m_0 - 1$ (leader
edges); $b_5 \in$ class A at pair distance $M/2 - 6 \equiv 2 \pmod 4$
with mirror $t_7$ in the block; both phases].
(4) and (5) form a $2$-cycle, a contradiction.

\emph{Case II: $m_0 \prec t_5$.} POLAR-outward: $m_0 \prec$ every odd.
Then:
(1) $b_6 \prec m_0 + 1$ [P2-inward flood at $m_0 + 1$ over the evens;
seed $m_0 \prec m_0 + 1$ (POLAR, $e_0 = 1$); mirror of $b_6$ is
$2M - 4$; both phases].
(2) $t_3 \prec m_0 + 1$ [$A_2$ $+$ (1)].
(3) $b_5 \prec m_0 + 1$ [R3 on the AP $(b_5,\, m_0 + 1,\, t_3)$:
$b_5 + t_3 = 3M + 2 = 2(m_0 + 1)$].
(4) $t_5 \prec m_0 + 1$ [$A_1$ $+$ (3)].
(5) $m_0 + 1 \prec t_5$ [G4-outward flood at $m_0 + 1$ over class B;
seeds $m_0 + 1 \prec m_0 + 3$ and $m_0 + 1 \prec m_0 - 1$ ($m_0 + 1$
is an odd-ladder leader); $t_5 \in$ class B at pair distance
$M/2 - 6 \equiv 2 \pmod 4$ with mirror $b_7$; both phases].
(4) and (5) form a $2$-cycle, a contradiction.
\end{proof}

Again Case I consumes $A_3$ and Case II consumes $A_2$, while $A_1$ is
consumed symmetrically in both (step 4). The proof visibly fails at
$M \equiv 4 \pmod 8$ in both cases: the centers $m_0 \pm 1$ land in
the wrong mod-$4$ classes for their G4-floods and the odd-ladder
leader statuses at $m_0 \pm 1$ invert, so neither seed set exists.

\begin{proof}[Proof of Theorem~\ref{thm:c3core}]
$A_2 + A_3$ force $b_5 \prec b_3$ (Theorem~\ref{thm:l1};
$M \equiv 0 \bmod 8 \subseteq 0 \bmod 4$). Then $A_1$ contradicts
Theorem~\ref{thm:flip}.
\end{proof}

\begin{remark}[Machine verification of the schemas]\label{rem:c3machine}
Although the proofs above are complete hand proofs, each is a
\emph{schema} whose instances at a given scale involve $\Theta(M)$
ladder steps; all have been machine-executed with strict per-step
assertions (every AP's membership and arithmetic, every rule pattern,
every leader/trailer and residue claim, every mirror bound, both
branches of every phase dichotomy): Theorem~\ref{thm:l1} at every
$M \equiv 0 \pmod 4$ in $[12, 400]$ plus $512, 1024$ ($100$ scales),
Theorem~\ref{thm:flip} at every $M \equiv 0 \pmod 8$ in $[16, 400]$
plus $512, 1024$ ($51$ scales), and the $M \equiv 4 \pmod 8$
inapplicability at $35$ scales
(script \texttt{e113\_c3\_hand\_proof.py}). An independent closure
engine (plain R1--R4 $+$ transitivity fixpoint, no knowledge of the
schemas) refutes every branch of both case trees at $51$ resp.\ $27$
scales up to $512$ (\texttt{e113b\_closure\_crossval.py}); and direct
SAT checks of the two theorem \emph{statements} at fresh scales never
touched by the discovery loop ($M = 148, 212, 264, 328$ UNSAT;
$M = 268, 332$, $\equiv 4 \bmod 8$, SAT) concur
(\texttt{e114\_theorem\_spotcheck.py}). The earlier exhaustive base
ledgers (AP${}+{}$C3 UNSAT at every $M \equiv 0 \bmod 8$ in
$[16, 256]$ and at $512$) are now corroboration, not a dependency.
\end{remark}

\begin{remark}[What remains open]\label{rem:c3open}
The main theorem needs nothing beyond the class $M \equiv 0 \pmod 8$.
The full order gadget is nevertheless conjectured infeasible at
\emph{every} $M \ge 16$ (machine-verified for $16 \le M \le 200$ and
$M = 512$, Proposition~\ref{prop:ogmachine}); at every swept scale with
$M \not\equiv 0 \pmod 8$ the C3 core was satisfiable, and other attack
subsets were observed computationally to take over
(Proposition~\ref{prop:cores}); a proof beyond the swept scales would
need per-residue analogues of Theorems~\ref{thm:l1}--\ref{thm:flip}
built from the same flood toolkit. This statement is not needed for
Theorem~\ref{thm:main}. The Erd\H{o}s--Graham problem \#197 itself
remains open: Theorem~\ref{thm:main} eliminates the canonical
candidate partition, but does not decide whether some other two-set
partition works.
\end{remark}

\section{Machine verification and data availability}\label{sec:data}

The proof chain of Theorem~\ref{thm:main} --- Theorem~\ref{thm:ogred}
followed by Theorem~\ref{thm:c3core} --- is human-readable throughout, and
no step of it is delegated to a solver. Machine work enters in three
distinct roles, which we separate explicitly because they carry very
different evidential weight. (1) \emph{Discovery}: the order gadget, the
three-axiom core, and the mod-$8$ residue phenomenon were found by
large-scale SAT experimentation; nothing in the final proof depends on how
they were found. (2) \emph{Audit of the writing}: Theorems~\ref{thm:l1}
and~\ref{thm:flip} are $\forall M$ schemas, and each of their instances at
a given scale unfolds into $\Theta(M)$ ladder steps; those instances have
been executed with per-step assertions over a large finite range of scales
by three independently written checking layers
(Remark~\ref{rem:c3machine}). This catches misstated lemmas and bookkeeping
slips, and is emphatically not a substitute for the proofs, which are
quantified over all $M \equiv 0 \pmod 8$. (3) \emph{Standalone verdicts}:
the finite claims of Lemma~\ref{lem:bases},
Proposition~\ref{prop:ogmachine}, Proposition~\ref{prop:cores} and
Appendix~\ref{sec:compobs} are solver results, true at the scales stated
and asserted at no others.

All experiment scripts, machine-audit checkers (including the mutation-test
suite for the checkers themselves), solver logs, and witness data are
contained in the accompanying repository,
\url{https://github.com/Wkasel/erdos197}. The exact state described here is
the immutable tag \texttt{arxiv-v1},
\url{https://github.com/Wkasel/erdos197/tree/arxiv-v1}; the default branch
may advance beyond it. The repository ships
\texttt{reproduce.sh}, which reruns the load-bearing verification stack for
the main theorem (the schema audits of Remark~\ref{rem:c3machine}, the
closure-engine cross-validation, fresh direct SAT checks, re-emission and
independent \texttt{drat-trim} checking of the DRAT certificates, and the
reduction checks of Section~\ref{sec:og}) from a pinned environment.
The repository contains DRAT certificates for exactly two instances, the
C3-core refutations at $M = 128$ and $M = 512$; by
Remark~\ref{rem:c3machine} these are corroboration rather than a
dependency. The other machine verdicts quoted in the paper --- among them
the base cases of Lemma~\ref{lem:bases}, the $\OG$ sweep of
Proposition~\ref{prop:ogmachine}, the attack cores of
Proposition~\ref{prop:cores}, the machine atlas and the ladder rungs ---
are reproducible from the scripts but are not accompanied by proof
certificates. The validity of the $\forall M$ schemas
rests on the hand proofs of Sections~\ref{sec:og}--\ref{sec:c3} and not on
any of these verdicts; their per-step machine audit
(Remark~\ref{rem:c3machine}) covers a finite range of scales only.

\section*{Acknowledgments and computational assistance}

I thank Jesse Geneson for comments on an earlier version of this paper,
which corrected two statements at the points where they apply: the balance
law (Lemma~\ref{lem:balance}), given there in a false symmetric form, and
the class-sink observation (Remark~\ref{rem:classsink}), asserted there
with an $r$-dependent cycle structure. His comments did not extend to a
verification of Theorem~\ref{thm:main} or of the chain leading to it.

This paper was prepared with machine assistance, which I disclose in full.
The language model Claude (Anthropic) was used for proof auditing, code
assistance, and editorial revision. The SAT solvers CaDiCaL and OR-Tools
CP-SAT were used for the computational experiments reported in
Section~\ref{sec:data} and Appendix~\ref{sec:compobs}, and for the
discovery work described there. The author takes full responsibility for
the contents of this paper, including every statement and proof and any
error that remains.

\appendix

\section{The chunk reduction}\label{sec:chunk}

\begin{sloppypar}
Every permutation of $\SA$ of order type $\omega$ decomposes into
consecutive finite segments (\emph{chunks}); conversely, a choice of finite
fibers and internal orders assembles into such a permutation. The exact
reformulation below is the coordinate system used in the elimination phase
of Appendix~\ref{sec:compobs}. As noted in the introduction, the reduction
of Section~\ref{sec:og} is proved without it, and nothing in this appendix
enters the proof of Theorem~\ref{thm:main}.
\end{sloppypar}

\begin{definition}
Let $S \subseteq \Z^+$ be infinite. A \emph{stage function} for $S$ is any
$s\colon S \to \N$ with finite fibers.
Given $s$ and a linear order on each fiber, let $T$ be the concatenation.
\end{definition}

\begin{theorem}[Chunk reduction; exact]\label{thm:chunk}
$T$ avoids monotone $3$-term APs if and only if:
\begin{enumerate}
\item[(A)] no AP triple $(x,y,z)$, $z=2y-x$, in $S$ has strictly monotone
stages ($s(x)<s(y)<s(z)$ or $s(z)<s(y)<s(x)$); and
\item[(B)] each fiber order contains the \emph{forced pairs}
\[
\begin{array}{ll}
s(x)<s(y)=s(z):\ z \prec y, &\quad s(z)<s(y)=s(x):\ x \prec y,\\
s(x)=s(y)<s(z):\ y \prec x, &\quad s(y)=s(z)<s(x):\ y \prec z,
\end{array}
\]
and is non-monotone on every within-fiber AP triple. (Triples whose endpoint
stages lie on one side of the middle's are automatically safe.)
\end{enumerate}
Hence $S$ is $3$-permutable if and only if some stage function for $S$,
together with a choice of fiber orders, satisfies
(A) and (B). The fiber problems are finite and mutually independent given
$s$. (Nothing in the statement or the proof uses any property of $S$; we
apply it to $S = \SA$ and, in Theorem~\ref{thm:degs}, to $S = \Z^+$.)
\end{theorem}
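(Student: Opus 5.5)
The plan is to prove the two directions separately, working position by position in $T$. Throughout, for $u\ne v$ in $S$, $u$ precedes $v$ in $T$ iff $s(u)<s(v)$, or $s(u)=s(v)$ and $u\prec v$ in the common fiber order. Since $s$ has finite fibers and $\N$ is well ordered, $T$ has order type $\omega$. It is therefore a bijection $\N\to S$, and the only issue is the monotone 3-AP condition.

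\emph{Necessity.} Assume $T$ avoids monotone 3-APs and fix an AP triple $(x,y,z)$ in $S$ with $z=2y-x$.

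If $s(x)<s(y)<s(z)$, then $x,y,z$ appear in that order in $T$, so the triple is monotone in position. The case $s(z)<s(y)<s(x)$ is the same with the order reversed. This gives (A).

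For (B), take the row $s(x)<s(y)=s(z)$. Here $x$ precedes both $y$ and $z$ in $T$. If $y\prec z$ held in the fiber, then $x,y,z$ would appear in that order, a monotone AP. Hence $z\prec y$. The other three rows follow by the same one-line argument, using the left/right symmetry $x\leftrightarrow z$ and reversal of position. A within-fiber AP triple that were monotone in the fiber order would be monotone in $T$, since $T$ restricted to a fiber is the fiber order.

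\emph{Sufficiency.} Assume (A) and (B), and suppose $T$ contains a monotone AP. Relabel it so that $x<y<z$, $z=2y-x$, and it occurs either as $x,y,z$ or as $z,y,x$ in $T$. Write $\sigma=(s(x),s(y),s(z))$. Position order implies the weak stage order, so $\sigma$ is weakly monotone in the same direction as the positions. There are four cases:

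1. All three stages are distinct. Then $\sigma$ is strictly monotone, contradicting (A).
2. All three stages are equal. Then the triple is monotone in the fiber order, contradicting (B).
3. Exactly two stages are equal and they sit on the "late" side, for instance $s(x)<s(y)=s(z)$ with occurrence $x,y,z$. Then $y\prec z$, contradicting the forced pair $z\prec y$.
4. The same as case 3 with the equal pair on the early side, or with the reversed occurrence. Each of these hits one of the other three rows of (B).

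One must check that every weakly monotone, non-constant, non-strict stage pattern has the middle term $y$ in the tied pair. It does, because the ties in a weakly monotone triple are consecutive. So each such pattern is exactly one of the four rows of (B). The parenthetical remark in the statement covers the case where $s(x),s(z)$ lie on the same side of $s(y)$: then the stage sequence is not weakly monotone, so the triple cannot be position-monotone.

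\emph{The "hence" clause.} A given permutation $\pi$ yields the stage function $s(\pi(i))=i$, whose fibers are singletons. Conversely, any valid $(s,\text{fiber orders})$ yields $T$ by the above. Independence of the fiber problems given $s$ holds because (B) constrains each fiber only through comparisons inside that fiber.

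The only step needing care is the exhaustive case match in sufficiency, that is, verifying that the four rows of (B) exactly cover the tied monotone patterns under both orientations. Everything else is direct.
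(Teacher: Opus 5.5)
Your proof is correct and follows the paper's argument essentially step for step. Necessity exhibits the monotone triple that each violated condition would produce, and sufficiency is the case split on the weakly monotone stage pattern, using the fact that any tie must involve the middle term $y$. You also make explicit two points the paper leaves tacit, and both are fine: $T$ has order type $\omega$ because the fibers are finite and $S$ is infinite, and the ``hence'' clause follows from the singleton-fiber stage function $s(\pi(i)) = i$.
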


\begin{proof}
Along positions of $T$ the stage is non-decreasing, and within a stage the
fiber order gives the positions. Suppose $T$ avoids monotone $3$-APs. If
(A) failed at $(x,y,z)$ with $s(x)<s(y)<s(z)$, then $x,y,z$ occur in $T$ in
that position order with $x<y<z$ --- an increasing $3$-AP (the case
$s(z)<s(y)<s(x)$ gives a decreasing one). If a forced pair of (B) failed,
say $s(x)<s(y)=s(z)$ with $y \prec z$ in their fiber, then $x, y, z$ occur
in position order --- again an AP; the other three rows and the within-fiber
non-monotonicity requirement are identical. Conversely, suppose (A) and (B)
hold and let $(x,y,z)$, $z = 2y-x$, be placed monotonically in $T$, say
$\pos(x)<\pos(y)<\pos(z)$ (the decreasing case is symmetric). Then
$s(x) \le s(y) \le s(z)$. Strict inequalities throughout contradict (A);
$s(x)=s(y)<s(z)$ puts $x \prec y$ in one fiber, contradicting the row
$s(x)=s(y)<s(z) \Rightarrow y \prec x$ of (B); $s(x)<s(y)=s(z)$ puts
$y \prec z$, contradicting $z \prec y$; and $s(x)=s(y)=s(z)$ is a monotone
within-fiber triple, excluded by (B). Triples whose endpoint stages lie on
one side of the middle's stage can never be placed monotonically, since the
middle's position is then extremal among the three; this is the safety
clause. The full case table over all stage patterns and fiber orders has
additionally been machine-checked exhaustively
(\texttt{e96\_reduction\_check.py}, part P2).
\end{proof}

\begin{lemma}[Normalization: running-max chunking]\label{lem:normal}
Every $3$-AP-free permutation $\pi$ of $\SA$ (of order type $\omega$)
admits a valid scheme --- a stage function with finite fibers satisfying
(A)$\wedge$(B), whose concatenation is $\pi$ itself --- with
$s(v) \ge \mathrm{block}(v)/2$ for every $v$.
\end{lemma}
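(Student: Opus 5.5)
The plan is to build the scheme directly from $\pi$ by taking stages to be a running maximum of block indices, and then to read off (A)$\wedge$(B) from the forward direction of Theorem~\ref{thm:chunk}. Every $v \in \SA$ has $\mathrm{block}(v)$ even and at least $2$, so $\beta(v) := \mathrm{block}(v)/2$ is a positive integer. Writing $\pi(1), \pi(2), \dots$ for the permutation, I define
\[
s(\pi(n)) \;=\; \max_{1 \le m \le n} \beta(\pi(m)).
\]
By construction $s(v) \ge \beta(v) = \mathrm{block}(v)/2$ for every $v$. Moreover $n \mapsto s(\pi(n))$ is non-decreasing, so each fiber $s^{-1}(k)$ is a set of \emph{consecutive} positions of $\pi$.

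Next I check that the fibers are finite, which is the only point needing an argument. Since $\pi$ is a bijection onto $\SA$, which meets blocks $B_{2k}$ for every $k$, the function $\beta \circ \pi$ is unbounded. Fix a stage value $k$ and pick a position $n_0$ with $\beta(\pi(n_0)) > k$. Then $s(\pi(n)) > k$ for all $n \ge n_0$, so $s^{-1}(k) \subseteq \{\pi(1), \dots, \pi(n_0 - 1)\}$ is finite. Some stage values may be skipped, but this is harmless because a stage function need only map into $\N$ with finite fibers.

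Now I order each fiber by $\pi$-position. Because fibers are consecutive position intervals and stages increase with position, concatenating the fibers in stage order reproduces $\pi$ exactly, so $T = \pi$. Since $\pi$ is $3$-AP-free, the ``only if'' direction of Theorem~\ref{thm:chunk} shows that $s$ together with these fiber orders satisfies (A) and (B). This is the required valid scheme.

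I do not expect a real obstacle. The one point to state carefully is the finiteness of fibers, which uses that $\pi$ is onto $\SA$ and hence hits arbitrarily high blocks. Everything else is inherited verbatim from Theorem~\ref{thm:chunk}, whose forward direction needs no property of $S$ or of $s$ beyond $T = \pi$.
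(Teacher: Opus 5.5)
Your proposal is correct and is essentially the paper's own proof. It uses the same running-maximum stage function $s(v)=\max_{q\le \pos(v)}\mathrm{block}(\pi(q))/2$, consecutive fibers that reproduce $\pi$, (A)$\wedge$(B) from the forward direction of Theorem~\ref{thm:chunk}, and finiteness of fibers from surjectivity onto $\SA$ (your note that skipped stage values are harmless is a detail the paper leaves implicit).
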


\begin{proof}
Set $s(v) = \max_{q \le \pos(v)} \mathrm{block}(\pi(q))/2$ (the running
maximum of half-block-indices). Along positions $s$ is non-decreasing, so
its fibers are consecutive segments of $\pi$ and the concatenation (with
the fiber orders induced by $\pi$) is $\pi$; Theorem~\ref{thm:chunk} then
gives (A)$\wedge$(B). Since $\mathrm{block}(v)/2$ enters the maximum at
$v$'s own position, $s(v) \ge \mathrm{block}(v)/2$. Each fiber is finite:
$\pi$ is onto $\SA$, so a value of block index $> 2\sigma$ occurs at some
finite position, after which the running maximum exceeds $\sigma$.
\end{proof}

\noindent The point of Lemma~\ref{lem:normal} is that non-negative
\emph{displacement} (Appendix~\ref{sec:compobs}) is a harmless
normalization, not an
automatic property: an arbitrary chunk decomposition can place a
high-block value in an early chunk. Restrictions of normalized schemes
stay normalized, which is what the divergence criterion there consumes.

The definitions above and Theorem~\ref{thm:chunk} hold verbatim with $\SA$
replaced by any countably infinite set $S \subseteq \Z^+$: no step of the
statement or the proof uses a property of $\SA$. Applying that formulation
to $S = \Z^+$ gives:

\begin{theorem}[DEGS via chunks]\label{thm:degs}
$\Z^+$ itself is not $3$-permutable.
\end{theorem}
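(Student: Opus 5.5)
We reduce to the orbit obstruction, Lemma~\ref{lem:orbit}, applied with $S = \Z^+$ and the finite attacker set $F = \{1, 2\}$. The chunk formalism of Theorem~\ref{thm:chunk} is used only to fix coordinates. Suppose $\pi$ is a $3$-AP-free permutation of $\Z^+$ of order type $\omega$. Take the trivial stage function $s = \pos$: every fiber is a singleton, and (A)$\wedge$(B) reduce to the requirement that no AP triple have strictly monotone stages. It then suffices to exhibit an AP triple whose stages are strictly monotone.

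\textbf{Step 1.} Put $q = \max(\pos(1), \pos(2))$. Only finitely many values occupy stages $\le q$, so fix $u_0 > \max\{\pi(1), \dots, \pi(q)\}$. Then every $u \ge u_0$ satisfies $s(u) > s(1)$ and $s(u) > s(2)$.

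\textbf{Step 2.} Build the orbit $u_{k+1} = 2u_k - 1$. It stays in $\Z^+$, is strictly increasing since $u_k \ge 2$, and all its terms are $\ge u_0$. For each $k$ the triple $(1, u_k, u_{k+1})$ is an AP with $1 < u_k < u_{k+1}$ and $s(1) < s(u_k)$. By condition (A), or directly by the completion rule for the increasingly placed pair $(1, u_k)$, we cannot have $s(u_{k+1}) > s(u_k)$. Since stages are distinct, $s(u_{k+1}) < s(u_k)$.

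\textbf{Step 3.} The stages $s(u_0) > s(u_1) > \cdots$ then form an infinite strictly decreasing sequence of positive integers, which is impossible. This is exactly the argument of Lemma~\ref{lem:orbit} with $F = \{1\}$ (even $\{1,2\}$ is more than needed), rewritten in stage language, and the contradiction proves the theorem.

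We expect no real obstacle. The one point to check is that the trivial stage function is admissible: its fibers are finite singletons, so the forced-pair rows of (B) are vacuous. One could instead try a coarser, running-maximum stage function in the spirit of Lemma~\ref{lem:normal}. In that case the step needing care is excluding equal stages in Step 2, which the forced-pair row $s(x) < s(y) = s(z) \Rightarrow z \prec y$ handles by giving the same strict descent in position within a fiber.
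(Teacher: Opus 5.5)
Your argument is correct, but it is really the orbit obstruction rather than a chunk argument. It is Lemma~\ref{lem:orbit} specialised to $S=\Z^+$; the paper remarks after that lemma that it recovers DEGS. Choosing $s=\pos$ makes every fiber a singleton, so (B) is vacuous, (A) is just the 3-AP-freeness of $\pi$, and the contradiction comes from strictly decreasing positions.

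The paper's proof runs the same reflected orbit $y_{i+1}=2y_i-1$ against the attacker $1$, but for an \emph{arbitrary} stage function with finite fibers satisfying only (A). Once the orbit has left the finitely many values of stage $\le s(1)$, (A) makes its stages merely non-increasing, hence eventually constant. That puts infinitely many orbit members into one finite fiber, which is the contradiction.

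The paper's route buys a stronger, scheme-level statement: no finite-fiber stage function on $\Z^+$ satisfies (A) at all, whatever the fiber orders. This is the point of the ``via chunks'' framing, since it shows a permutable team must break reflected orbits already at the level of stages. Your route is shorter and entirely elementary, but it makes the appeal to Theorem~\ref{thm:chunk} decorative.

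Your closing remark also overestimates the difficulty of coarser stage functions. You do not need the forced-pair row of (B) to rule out ties along the orbit. Non-increasing stages together with finiteness of fibers already give the contradiction.
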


\begin{proof}
Suppose $s\colon \Z^+ \to \N$ satisfies
(A) with finite fibers and put $\sigma = s(1)$. For every $y$ with
$s(y) > \sigma$, the triple $(1, y, 2y-1)$ forces $s(2y-1) \le s(y)$ by (A).
The orbit $y_0,\, y_{i+1} = 2y_i - 1$ is strictly increasing; only finitely
many of its members can have stage $\le \sigma$ (those fibers are finite),
after which its stages are non-increasing, hence eventually confined to one
finite fiber --- impossible for an infinite set.
\end{proof}

\noindent This recovers the Davis--Entringer--Graham--Simmons theorem (in its
monotone form; \cite{DEGS77} proves the stronger increasing-3-AP statement)
in the chunk coordinates and isolates what a permutable team must do: break every
such reflected orbit out of the team. The dyadic team $\SA$ breaks
single-reflector orbits in two steps (completions land in odd blocks); the
question is whether the multi-scale accounting below can be survived.

\begin{lemma}[Same block]\label{lem:sameblock}
In every AP triple of $\SA$ the middle $y$ and top $z$ lie in the same even
block. ($z<2y\le 2^{k+1}$ and block $k{+}1$ is odd.)
\end{lemma}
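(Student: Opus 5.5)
The plan is to argue directly from the definition of an AP triple inside $\SA$. Take an AP triple $x<y<z$, $z=2y-x$, with all three values in $\SA$. Let $B_k$ be the block containing $y$; since $y\in\SA$, $k$ is even and $2^{k-1}<y\le 2^k$. The goal is to show that $z$ also lies in $B_k$.

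The key steps are two bounds on $z$, using only the block boundaries and the fact that consecutive blocks alternate in parity of index.

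For the lower bound, $z>y>2^{k-1}$, so $z$ lies in $B_k$ or in some later block.

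For the upper bound, $x\ge 1$ gives $z=2y-x<2y\le 2^{k+1}$, so $z\le 2^{k+1}$. Hence $z\in B_k\cup B_{k+1}$.

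Since $k$ is even, $k+1$ is odd. So $B_{k+1}\subseteq\SB$, that is, $B_{k+1}\cap\SA=\emptyset$. Because $z\in\SA$, this forces $z\in B_k$, which is the same even block as $y$.

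For completeness, I would note that $x\ge 3$ for every $x\in\SA$, so in fact $z\le 2^{k+1}-3$; the weaker bound $z<2^{k+1}$ already suffices. No step here is a real obstacle. The only point needing care is the strict inequality $z<2y$. It is what excludes $z=2^{k+1}$, and that value is the top element of the odd block $B_{k+1}$, so it lies outside $\SA$ in any case.
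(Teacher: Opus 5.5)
Your proof is correct and is the paper's argument: the paper's proof is the parenthetical $z<2y\le 2^{k+1}$ together with the oddness of block $k+1$, and you have only made explicit the lower bound $z>y>2^{k-1}$ that puts $z$ in $B_k\cup B_{k+1}$. Your closing remark in fact shows the strict inequality is not needed, since $2^{k+1}\in B_{k+1}$ lies outside $\SA$ in any case.
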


\begin{lemma}[Escape]\label{lem:escape}
If $y \ge 3\cdot 4^{s-1}$ (top quarter of block $2s$) and $x \le 4^{s-1}$,
then $2y-x > 4^{s}$: completions of (lower value, top-quarter value) pairs
land in the odd block above --- out of team.
\end{lemma}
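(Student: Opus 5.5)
The claim is a direct inequality, so the plan is simply to bound $2y - x$ from below using the two hypotheses, and then, for the parenthetical ``out of team'' assertion, from above as well, so as to locate the completion in block $B_{2s+1}$.

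\emph{Lower bound.} Since $y \ge 3\cdot 4^{s-1}$ and $x \le 4^{s-1}$, we will have
\[
2y - x \;\ge\; 6\cdot 4^{s-1} - 4^{s-1} \;=\; 5\cdot 4^{s-1} \;>\; 4\cdot 4^{s-1} \;=\; 4^{s} \;=\; 2^{2s}.
\]
This is the stated inequality, and it shows that the completion escapes block $B_{2s} = (2^{2s-1}, 2^{2s}]$ upward.

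\emph{Upper bound.} Here I will read $y$ as an element of block $2s$, as the phrase ``top quarter of block $2s$'' intends, so that $y \le 4^{s}$. I will also use that $x \ge 1$, since values are positive integers. Then $2y - x \le 2\cdot 4^{s} - 1 < 2^{2s+1}$. Combining the two bounds gives $2y - x \in (2^{2s}, 2^{2s+1}] = B_{2s+1}$.

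\emph{Conclusion.} The index $2s+1$ is odd, so $B_{2s+1} \cap \SA = \emptyset$ by Definition~\ref{def:dyadic}, and the completion lies outside the team. This is consistent with Lemma~\ref{lem:sameblock}: a top value of an AP inside $\SA$ must share the even block of its middle $y$, and this completion does not.

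There is no real obstacle. The only point requiring care is the implicit hypothesis $y \in B_{2s}$, which is needed for the upper bound. Without it the bare inequality $2y - x > 4^{s}$ still holds, but the completion need not lie in an odd block. I will therefore state that reading explicitly.
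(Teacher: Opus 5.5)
Your argument is correct and matches the paper: the paper states this lemma without a written proof, and the intended justification is the one-line computation $2y - x \ge 6\cdot 4^{s-1} - 4^{s-1} = 5\cdot 4^{s-1} > 4^{s}$. Your added upper bound $2y - x < 2\cdot 4^{s}$, which uses $y \in B_{2s}$, makes the ``odd block above'' clause explicit, parallel to the bound $z < 2y \le 2^{k+1}$ in Lemma~\ref{lem:sameblock}.
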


\begin{lemma}[Bottom-half characterization]\label{lem:bottomhalf}
Deferring $D_k \subseteq$ block $k$ by a uniform delay (adding a constant to
$s$ on $D_k$) creates no violation of condition (A) if
$D_k \subseteq (2^{k-1}, 3\cdot 2^{k-2}]$: the witness midpoints
$y=(x+z)/2$ of a bottom-half $z \in D_k$ against an $\SA$-value $x$ in a
lower block (so $x \le 2^{k-2}$) satisfy
$2^{k-2} < y \le 2^{k-1}$, i.e.\ lie in the odd block below. Conversely a
deferred top-half $z$ has genuine in-team witnesses, forcing its witness
midpoints (bottom-quarter values) to defer with it.
\end{lemma}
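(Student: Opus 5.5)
The plan is a direct stage computation against condition (A) of Theorem~\ref{thm:chunk}. Adding a constant to $s$ on $D_k$ and leaving all other stages unchanged can only break (A) at an AP triple of $\SA$ that meets $D_k$. The triple must also have a member outside $D_k$, since a triple inside $D_k$ keeps its relative stage pattern. I will follow the statement and handle the case it singles out: a deferred $z\in D_k$ facing an $\SA$-value $x$ from a lower even block, so that $x\le 2^{k-2}$. Using Lemma~\ref{lem:sameblock}, I first record which configurations put $z$ as the top of the progression. Writing $y=(x+z)/2$, a triple $(x,y,z)$ is genuine only if $y\in\SA$.

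The key step is a bound on $y$. Take $z\in(2^{k-1},3\cdot 2^{k-2}]$ and $x\in(2,2^{k-2}]$, with $x\equiv z \pmod 2$ so that $y$ is an integer. Then
\[
y=\tfrac{x+z}{2}\;\le\;\tfrac{2^{k-2}+3\cdot 2^{k-2}}{2}=2^{k-1},
\qquad
y=\tfrac{x+z}{2}\;>\;\tfrac{2^{k-1}}{2}=2^{k-2},
\]
so $y\in(2^{k-2},2^{k-1}]=B_{k-1}$. Since $k$ is even, $B_{k-1}$ is an odd block and $y\notin\SA$. No such witness triple therefore exists, and delaying $z$ cannot create a strictly monotone stage pattern $s(x)<s(y)<s(z)$ or its reverse with $z$ as the endpoint against a lower-block $x$. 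The remaining configurations against lower values are the mirror ones with $z$ the middle and $x$ below. By Lemma~\ref{lem:sameblock} the top $2z-x$ lies in block $k$. I will check these against (A) by the same one-sided stage argument, noting that a uniform delay of a middle can only push it above the lower endpoint, which is exactly the safe side of the clause in Theorem~\ref{thm:chunk}.

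For the converse I will exhibit genuine in-team witnesses. Take $z\in(3\cdot 2^{k-2},2^k]$ and choose $x$ of the same parity near the top of $B_{k-2}$, or slightly lower, so that $y=(x+z)/2$ exceeds $2^{k-1}$. Then $y\in B_k\subseteq\SA$; more precisely, the upper bound $y\le (2^{k-2}+2^k)/2=5\cdot 2^{k-3}$ places $y$ in the bottom quarter of block $k$. Now suppose $s(x)<s(y)$, as in a normalized scheme in the sense of Lemma~\ref{lem:normal}. Raising $s(z)$ above $s(y)$ then gives $s(x)<s(y)<s(z)$, violating (A), unless $y$ is deferred along with $z$.

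I expect the main obstacle to be the bookkeeping rather than the arithmetic. One must pin down exactly which triples the uniform delay affects, including within-block triples with two members in $D_k$ and one outside, and the boundary cases at $z=3\cdot 2^{k-2}$ and $x=2^{k-2}$. I would settle these by an explicit case table over the position of the deferred element (bottom, middle, top) and the block of its partners, just as in the proof of Theorem~\ref{thm:chunk}.
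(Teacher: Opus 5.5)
Your argument is the paper's own: the lemma's only justification is the inline bound $2^{k-2}<y\le 2^{k-1}$, which puts every would-be midpoint in the odd block $B_{k-1}$, and you reproduce it; your converse witness with $y\le 5\cdot 2^{k-3}$ is exactly the paper's bottom-quarter claim. Two small caveats, the first shared with the statement itself: at $z=3\cdot 2^{k-2}+1$ the best same-parity choice $x=2^{k-2}-1$ gives $y=2^{k-1}\notin\SA$, so the converse holds only for $z\ge 3\cdot 2^{k-2}+2$; and your ``safe side'' remark for deferred middles (like the within-block cases you leave to a case table) holds in the implicit setting where block $k$ starts at a single stage $\sigma\ge s(x)$, but not for arbitrary schemes, where a small delay can leave a middle strictly between its endpoints' stages.
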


\section{Computational observations and conjectures}\label{sec:compobs}

Everything in this appendix is a record of \emph{finite-scale computation}:
measurements, solver verdicts at particular horizons, and the conjectures
they suggest. None of it is used in the proof of Theorem~\ref{thm:main},
whose chain is Theorem~\ref{thm:ogred} together with
Theorem~\ref{thm:c3core}. We report it because it is what located the
order gadget and the two crowns $15, 16$, and because the conjectures it
raises seem to us the natural next questions. Statements here that are
quantified over all scales are explicitly labelled as conjectures; the
rest are verdicts at the horizons named, and carry no proof certificates
(see Section~\ref{sec:data}).

\subsection{Local realizability and the tower characterization}

\begin{definition}[Pure-complete systems]
For an $\SA$-block top $X$, the \emph{pure-complete-$X$ system} asks for an
arrangement of $\SA \cap [1, X]$ with no monotone 3-AP. (All completions of
in-range pairs lie in $(X, 2X]$, an odd block, or inside the range; hence this
is exactly the restriction of the infinite problem.)
\end{definition}

\begin{lemma}[Horizon collapse]
Deleting any set of values from a valid arrangement preserves validity; hence
auxiliary values above $X$ never aid the pure-complete-$X$ system.
\end{lemma}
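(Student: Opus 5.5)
The statement is the ``Horizon collapse'' lemma. It has two parts: deletion preserves validity, and the pure-complete-$X$ system gains nothing from auxiliary values above $X$. I would prove the first part directly and deduce the second from it.

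\emph{Step 1: deletion.} Let $\prec$ be a valid arrangement of a set $V$, meaning no monotone $3$-AP in position order. Let $W \subseteq V$ and take the induced order on $W$, i.e.\ the subsequence. Any three values $a<b<c$ of $W$ with $c = 2b - a$ appear in the same relative order in the subsequence as in the original. So a monotone $3$-AP in the subsequence would already be one in $\prec$, which is impossible. If the original arrangement is an $\omega$-sequence and $W$ is infinite, the subsequence is again of order type $\omega$. If $W$ is finite, it is a finite arrangement. Either way the induced arrangement is valid.

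\emph{Step 2: auxiliary values are useless.} Suppose some enlarged system is solvable: an arrangement of $\SA \cap [1,X]$ together with extra values (e.g.\ elements of $\SA$ above $X$, or any auxiliary values) with no monotone $3$-AP. Delete the extra values. By Step 1 we obtain a valid arrangement of $\SA \cap [1,X]$, which is a solution of the pure-complete-$X$ system. Conversely, every solution of the pure system is trivially a solution of itself. So the solvability of any enlarged system implies that of the pure system, and in that sense auxiliary values never help. In particular, if the pure-complete-$X$ system is infeasible, then so is every system containing it. By the same deletion argument this includes any putative infinite permutation of $\SA$: restricting it to $[1,X]$ solves the pure system.

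No step is a real obstacle. The only point needing care is to phrase the second clause as a statement about feasibility: constraints are indexed by triples of present values, and deleting values only removes constraints.
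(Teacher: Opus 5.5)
Your proof is correct and matches the paper's approach. The paper states this lemma without a written proof and relies on the same one-line observation it also invokes in Remark~\ref{rem:lean}: a subsequence preserves the relative order of every surviving triple, so deletion only removes constraints, and any arrangement with auxiliary values restricts to a solution of the pure-complete-$X$ system.
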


\begin{theorem}[Tower characterization]
$\SA$ is permutable iff there exists $B\colon \N \to \N$ such that for every
block top $X$ the system [pure-complete-$X$ and every $v$ has at most $B(v)$
predecessors] is satisfiable.
\end{theorem}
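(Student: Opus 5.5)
The plan is to prove the two directions separately: the forward one by restricting a permutation and deleting values, the backward one by a compactness (diagonal) argument. The uniform predecessor bound $B$ is exactly what makes the limit object have order type $\omega$.

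\emph{($\Rightarrow$).} Suppose $\pi$ is an admissible permutation of $\SA$.
\begin{itemize}
\item Put $B(v) = \pos(v) - 1$.
\item For a block top $X$, restrict $\pi$ to $\SA \cap [1,X]$, keeping the induced order.
\item By Horizon collapse (deletion preserves validity), this restriction has no monotone $3$-AP, so it solves the pure-complete-$X$ system.
\item Every predecessor of $v$ in the restriction is also a predecessor of $v$ in $\pi$. Hence $v$ has at most $\pos(v)-1 = B(v)$ predecessors.
\end{itemize}
So each bounded system is satisfiable.

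\emph{($\Leftarrow$), choosing a limit order.} Fix $B$, and for each block top $X$ fix a witness order $\sigma_X$ of $\SA \cap [1,X]$.
\begin{itemize}
\item For each pair $u \ne v$ in $\SA$, the relative order of $u,v$ in $\sigma_X$ is defined once $X \ge \max(u,v)$.
\item There are countably many pairs. A diagonal extraction (equivalently, compactness of $\{0,1\}^{\text{pairs}}$) gives an infinite sequence of block tops $X_1 < X_2 < \cdots$ along which every pair's orientation is eventually constant.
\item Define $u \prec v$ iff $u$ precedes $v$ in $\sigma_{X_n}$ for all large $n$.
\end{itemize}

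\emph{($\Leftarrow$), properties of $\prec$.} Each finite configuration of pairs stabilises simultaneously at some $X_n$. Hence every finite fact about $\prec$ is witnessed by one genuine linear order $\sigma_{X_n}$, which gives the following.
\begin{itemize}
\item $\prec$ is a linear order: antisymmetry, totality and transitivity on any three elements are inherited from a single $\sigma_{X_n}$.
\item $\prec$ has no monotone $3$-AP: a monotone triple $(a,b,c)$ under $\prec$ would already be monotone in some $\sigma_{X_n}$ with $X_n \ge c$, contradicting the pure-complete-$X_n$ constraint.
\item Each $v$ has at most $B(v)$ predecessors under $\prec$. If $v$ had $B(v)+1$ predecessors $u_0,\dots,u_{B(v)}$, then some single $\sigma_{X_n}$ would have all of them before $v$, contradicting its predecessor bound.
\end{itemize}

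\emph{($\Leftarrow$), order type.} A linear order on a countably infinite set in which every element has finitely many predecessors has order type $\omega$. Indeed, $v \mapsto \#\{u : u \prec v\}$ is injective, and its image is an initial segment of $\{0,1,2,\dots\}$. Every value of $\SA$ therefore receives a finite position, and $\prec$ is a bijection $\N \to \SA$ with no monotone $3$-AP. So $\SA$ is permutable.

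The only delicate point is this order-type step. Without the bound $B$, the limit order could place infinitely many values before some $v$, yielding an order type longer than $\omega$. This is consistent with the result of \cite{HS24} that AP-free orders of other types exist, and it is precisely why $B$ appears in the statement. With $B$ in hand, the step is immediate from the finiteness of predecessor sets.
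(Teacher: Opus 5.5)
Your proof is correct and follows the paper's route: for the forward direction you restrict a permutation and take $B$ from the positions, and for the backward direction you use a compactness argument followed by the observation that finitely many predecessors forces order type $\omega$. The only difference is that your diagonal extraction over pair orientations stands in for the paper's K\H{o}nig's lemma on the nested finite restriction sets, so you do not need witnesses at different horizons to be coherent, whereas the paper uses that restricting a witness yields a witness.
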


\begin{proof}
($\Rightarrow$) restrict a permutation and take $B(v)$ its positions.
($\Leftarrow$) the restriction sets are finite and nested; K\H{o}nig's lemma
yields a consistent tower whose limit has each $v$ preceded by at most $B(v)$
values, hence order type $\omega$; all constraints are finitary and inherited.
\end{proof}

Computationally: pure-complete-$X$ is satisfiable for $X \le 1024$
(explicit witnesses in the repository), and the completion-pressure function
$g_X(L) = \min \max_{v \le L} \pos(v)$ satisfies $g_{256}(64) = 153$,
this value being optimal. The equality $g_{256}(64) = 153$ means that at
least $153 - 42 = 111$ of the $128$ values in $(128,256]$ precede the
last-placed member of $\SA \cap [1,64]$ (recall
$|\SA \cap [1,64]| = 42$, so at most $42$ of the first $153$ positions can
be occupied by members of $\SA \cap [1,64]$ itself). In the optimal
arrangements found it is the \emph{deferred} values that are structured: the
$16$ members of $(128,256]$ placed after that point are almost exactly a
single residue class modulo $8$ (fifteen of them lie in class $1$).
Divergence of $g_X(L)$ in $X$ for fixed $L$ would prove
non-permutability directly; our measurements did not exhibit divergence
(the subset-tolerance is near-total), and this route was abandoned in favour
of the one taken in Sections~\ref{sec:og}--\ref{sec:c3}. There is no
tension with Theorem~\ref{thm:main}: stabilization of $g_X(L)$ for each
fixed $L$ does not by itself produce a single bound $B$ as required by the
tower characterization, because the minimizing arrangements vary with $L$.

\subsection{Self-similar witnesses, fragility, and pumping obstructions}

Adding the exact scale-invariance $u \prec w \iff 4u \prec 4w$ to the
pure-complete systems remains satisfiable at $X = 256$ and $X = 1024$
(independently audited witnesses); however the audited $1024$-witness does not
extend to a self-similar $4096$-arrangement (solver verdict, uncertified),
so the self-similar tower also has dead branches.

We also record a calibration fact: requiring, in addition, that the $\pm 1$
neighbors (within the same block and team) of each forced completion also
precede the pair's later element (``radius-1 robustness'') is infeasible in
every setting we tested --- including plain intervals $[1, n]$ from $n = 8$,
where no partition is involved at all. We have no proof that this persists,
but within the tested range radius-1 robustness fails for reasons that have
nothing to do with the dyadic partition, so it does not discriminate between
partitions and is of no use as a satisfiability criterion; a pumping
argument would instead have to manage
rounding drift on the thin family of far-pair constraints only. The
radius-one experiments are \emph{evidence} against placement rules in which
position varies slowly with value; we neither formalize such a class of
rules here nor prove a theorem about it, and we draw no further inference
from the observation.

\subsection{The negative atlas}\label{sec:negative}

\begin{theorem}[Block-granular death]\label{thm:blockgranular}
No stage function whose fibers are unions of whole even blocks satisfies
(A)$\wedge$(B).
\end{theorem}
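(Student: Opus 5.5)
The plan is to exploit the fact that a block-granular stage function places whole blocks before whole blocks, which is exactly the situation that the zone system $Z(M)$ of Section~\ref{sec:contig} was built to capture. Let $s$ be a stage function for $\SA$ with finite fibers that is constant on each even block; write $\sigma_k$ for its value on $B_{2k}$. The argument has three steps: find a scale where a whole lower block precedes a higher one, show the fiber order then witnesses $Z(M)$, and contradict the unsatisfiability of $Z(M)$.

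\emph{Step 1: find $k$ with $\sigma_{k-1} < \sigma_k$ at a usable scale.} Each fiber is finite, so it contains only finitely many blocks, and hence $\sigma_k \to \infty$. In particular the sequence $(\sigma_k)$ is not eventually non-increasing. Therefore there is some $k \ge 3$ with $\sigma_{k-1} < \sigma_k$. Put $M = 2^{2k-1} \ge 32$, so that $B_{2k} = (M, 2M]$ and $B_{2k-2} = (M/4, M/2]$.

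\emph{Step 2: the fiber order on $B_{2k}$ witnesses $Z(M)$.} Restrict the fiber order of stage $\sigma_k$ to the block $(M,2M]$. Condition (a) of $Z(M)$ holds because (B) requires every fiber order to be non-monotone on each within-fiber AP triple, and restriction preserves this. For condition (b), take $y < z$ in $(M,2M]$ with $x = 2y - z \in (M/4, M/2]$. Then $(x,y,z)$ is an AP in $\SA$ with $s(x) = \sigma_{k-1} < \sigma_k = s(y) = s(z)$. The forced-pair row $s(x)<s(y)=s(z) \Rightarrow z \prec y$ of (B) then gives exactly (b). So $Z(M)$ is satisfiable.

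\emph{Step 3: contradiction.} By Lemmas~\ref{lem:halving} and~\ref{lem:bases}, $Z(M)$ is unsatisfiable for every $M \ge 16$, exactly as in the proof of Theorem~\ref{thm:contig}. This contradicts Step 2, which proves the theorem. The only delicate point is that the halving descent is stated with its boundary cases called ``routine''; I would check that Step 2 uses only the constraints Lemma~\ref{lem:halving} transports.

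If one prefers not to lean on the machine-checked Lemma~\ref{lem:bases}, the same conclusion follows from the main chain, via the chunk form of the reduction. Let $\sigma = s(15) = s(16)$, since both lie in $B_4$. Only finitely many blocks have stage $\le \sigma$, so for all large $t$ every bottom $b_1,\dots,b_8$ of $B_{2t}$ has stage $> \sigma$. Remark~\ref{rem:ogchunk} then shows that the induced block order satisfies $\OG(2^{2t-1})$. This contradicts Theorem~\ref{thm:c3core}, since $2^{2t-1} \equiv 0 \pmod 8$.

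The main obstacle in the first route is ensuring that the appeal to $Z(M)$ is legitimate, in particular matching the precise form of (b) against the forced pairs and keeping $M \ge 16$. In the second route the corresponding check is that Remark~\ref{rem:ogchunk} needs only that the bottoms' stages exceed those of the crowns. Block-granularity is used only to guarantee this for all but finitely many blocks.
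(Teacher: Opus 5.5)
Your proof is correct, but it uses a different finite core from the paper's.

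The paper's route is as follows. It builds a fatal core $F_b$ on $B_b$ whose forced pairs $z\prec y$ come from \emph{every} $x\in\SA$ below $B_b$ (for $b=6$, the $51$ attacks of $\{3,4,9,\dots,16\}$). It refutes $F_6$ by solver, lifts the refutation upward along $v\mapsto 4v$, and then exhibits an ascending boundary $s(B_{b-2})<s(B_b)$.

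You keep only the attacks from the adjacent block $B_{2k-2}$, which is exactly the zone system $Z(M)$. You then descend by Lemma~\ref{lem:halving} to the base verdict of Lemma~\ref{lem:bases}. At powers of two the halving is exact, so the boundary cases you flag do not arise.

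Your route buys two things.
\begin{itemize}
\item $Z(32)$ is a subsystem of $F_6$, so your base verdict implies the paper's.
\item More substantively, an adjacent ascent $\sigma_{k-1}<\sigma_k$ is all $Z(M)$ needs. By contrast, $F_b$ sits inside the scheme only when every lower even block has strictly smaller stage. A lower block at a higher stage flips its forced pairs to $y\prec z$, by the row $s(y)=s(z)<s(x)$ of (B). The paper's last paragraph produces only the adjacent ascent, so its argument needs the easy repair of taking $b$ to be a strict stage record, which exists since $\sigma_k\to\infty$. Your route never meets this issue, and your Step~2 is just the scheme-level form of the proof of Theorem~\ref{thm:contig}.
\end{itemize}

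Both routes still rest on an uncertified solver verdict. Your second route removes that dependence, but block-granularity plays no role in it. Finite fibers alone leave only finitely many values at stage $\le\max(s(15),s(16))$, so the argument shows that no valid scheme exists at all. That is Theorem~\ref{thm:main} read through the exactness of Theorem~\ref{thm:chunk}. This is valid, but it makes this appendix result a corollary of the main chain rather than an independent elimination step.
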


\begin{proof}
For an even $b \ge 6$ define the \emph{fatal core} $F_b$: the variables are
the relative order of the elements of the block $B_b$, and the constraints
are
\begin{enumerate}
\item[(a)] the forced pair $z \prec y$ for every AP triple $(x, y, z)$,
$z = 2y - x$, with $x \in \SA$ lying below $B_b$ and $y, z \in B_b$ --- these are the
instances of the row $s(x) < s(y) = s(z) \Rightarrow z \prec y$ of (B) that
a stage boundary below $B_b$ creates; and
\item[(b)] non-monotonicity on every AP triple lying inside $B_b$.
\end{enumerate}
Any fiber whose lowest block is $B_b$ contains $F_b$ as a subsystem, and a
supersystem of an unsatisfiable system is unsatisfiable.

$F_6$ is unsatisfiable: it has $51$ forced pairs and $240$ triples on the
$32$ elements of $B_6$, and is refuted by CaDiCaL in well under a second
(script \texttt{e59}, independently reproduced by \texttt{e67}; this is a
solver verdict on one finite instance and carries no proof certificate, cf.\
Section~\ref{sec:data}). The map $v \mapsto 4v$ then embeds $F_b$ into
$F_{b+2}$: it
preserves $\SA$, sends block $j$ to block $j{+}2$, and preserves AP triples;
so every $F_b$ with $b \ge 6$ even is unsatisfiable.
It remains to see that a block-granular scheme
has an \emph{ascending} fiber boundary above block $4$, i.e.\ some
$b \ge 6$ with $s(B_{b-2}) < s(B_b)$ (this is what puts the forced pairs of
$F_b$ into the system). If there were none, then
$s(B_4) \ge s(B_6) \ge s(B_8) \ge \cdots$ would be a non-increasing
sequence of natural numbers, hence eventually constant; infinitely many
blocks would then share a single fiber, contradicting the finiteness of
fibers.
\end{proof}

\begin{theorem}[$\times 4$ restriction; window death is permanent]
\label{thm:restriction}
For the window class $\{s : s(v)-\mathrm{block}(v)/2 \in [0,w]\}$:
feasibility at horizon $4N$ implies feasibility at horizon $N$ (restrict
along $v \mapsto 4v$ and shift stages by one). Consequently the machine
verdicts below are permanent (hold at all larger horizons), and an infinite
solution in a window class would imply feasibility at every finite horizon.
\end{theorem}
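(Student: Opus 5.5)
The plan is to prove the statement by restriction along the map $v \mapsto 4v$, exactly as Theorem~\ref{thm:blockgranular} embeds $F_b$ into $F_{b+2}$. Suppose $s$ is a valid scheme at horizon $4N$, in the window class $s(v)-\mathrm{block}(v)/2\in[0,w]$. It comes with fiber orders satisfying (A)$\wedge$(B) on $\SA\cap[1,4N]$. For $v \in \SA\cap[1,N]$ I would define
\[
s'(v) = s(4v) - 1, \qquad u \prec' v \iff 4u \prec 4v \text{ within a fiber}.
\]
The first fact to record is that $v\mapsto 4v$ maps $\SA$ into $\SA$. It sends $B_k$ onto a subset of $B_{k+2}$, so it preserves even blocks and gives $\mathrm{block}(4v)=\mathrm{block}(v)+2$. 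Hence $s'(v)-\mathrm{block}(v)/2 = s(4v)-\mathrm{block}(4v)/2\in[0,w]$, and the window condition transfers with the shift by one. Fibers of $s'$ are preimages of fibers of $s$ under an injection, so they are finite. Stages stay in $\N$ provided $s(4v)\ge 2$, which holds since $\mathrm{block}(4v)/2\ge 2$.

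Next I would check (A) and (B). The key point is that $v\mapsto 4v$ maps AP triples $(x,y,z)$ of $\SA\cap[1,N]$ to AP triples $(4x,4y,4z)$ of $\SA\cap[1,4N]$. It also preserves strict and non-strict comparisons between stages, because $s'$ is $s\circ 4$ shifted by a constant. So a strictly monotone stage pattern for $s'$ would be one for $s$, contradicting (A). Each forced-pair row of (B) for $(x,y,z)$ under $s'$ is the same row for $(4x,4y,4z)$ under $s$, and that row holds by the definition of $\prec'$. Likewise, within-fiber non-monotonicity pulls back. Equivalently, one can argue via Theorem~\ref{thm:chunk}: the concatenation for $s'$ is the order induced on $4\cdot(\SA\cap[1,N])$ by the concatenation for $s$, and deleting values preserves validity (horizon collapse). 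In this form the argument is a one-liner.

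The step needing care, and the main obstacle, is the meaning of ``horizon''. The pure-complete system at horizon $X$ must contain exactly the constraints of the infinite problem restricted to $[1,X]$. I need the restricted system at $N$ to be a subsystem of the pulled-back system at $4N$. Every constraint at horizon $N$ involves a triple with all three members in $\SA\cap[1,N]$, since completions escaping upward land in an odd block. Its image lies in $[1,4N]$, so it is indeed a constraint there. If the window class also imposes boundary or normalization conditions, such as $s\ge \mathrm{block}/2$ from Lemma~\ref{lem:normal}, I must verify these transfer as well; the computation above handles exactly that.

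For the consequences, I would iterate the restriction. An infeasible horizon $N$ makes horizons $4N, 16N, \dots$ infeasible. Interleaving with monotonicity in the horizon covers all larger horizons: any valid scheme restricts by deletion to a smaller horizon, and block tops are powers of $4$ times fixed tops. Finally, an infinite window-class solution restricts by deletion to every finite horizon, giving feasibility everywhere.
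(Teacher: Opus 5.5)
Your proposal is correct and follows the paper's own parenthetical argument. You pull the scheme back along $v \mapsto 4v$, set $s'(v) = s(4v)-1$, use $\mathrm{block}(4v)=\mathrm{block}(v)+2$ to keep the displacement window, and transfer AP triples, stage comparisons, forced pairs and fiber orders. As you partly note, the permanence and infinite-solution consequences already follow from plain deletion-restriction to smaller horizons, so the $\times 4$ iteration there is harmless but not needed.
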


\noindent Machine atlas (CP-SAT and CDCL cross-checks; solver logs and
witnesses in the repository, but no proof certificates for the negative
verdicts): bottom-half relief of depth $1$ and depth $\le 2$ infeasible at
$N=256$ (and $1024$); \textbf{window-$2$ infeasible at $N=1024$} (CP-SAT, 746s; independently CDCL/unary-ladder, 1760s) --- hence, by
Theorem~\ref{thm:restriction}, infeasible at every horizon
$\ge 1024$, so there is no infinite window-$2$ solution. (At the smaller
horizon $N=256$ window~$2$ is still feasible, but only via whole-block
merges, of minimum total displacement $41$; the merged skeleton is
$C_s = 2^{s}\cdot[3\cdot 2^{s-2}, 2^{s}]$, the top-quarter multiples of
$2^{s}$ --- precisely the set singled out by Lemma~\ref{lem:escape}.)

\medskip
Among the classes of stage function we were able to search, the atlas
leaves one shape of candidate for a YES: stage functions
with unbounded displacement (chunks mixing stragglers of unboundedly many
blocks). Closing the remaining gap --- between
Theorems~\ref{thm:blockgranular}/\ref{thm:restriction} and general
finite-fiber schemes --- by the per-block deferral calculus of
Lemma~\ref{lem:bottomhalf} was the state of the elimination phase before
Section~\ref{sec:og}, where the gap is closed instead by the order-gadget
route, which does not use the chunk calculus at all. The same analysis
applies to $\SB{}$ by the evident mirror symmetry (checked by machine at
$4096$).

\subsection{The displacement ladder}\label{sec:ladder}

For a stage function $s$ write $\delta(v) = s(v) - \mathrm{block}(v)/2$
(\emph{displacement}); a scheme is \emph{normalized} if $\delta \ge 0$
everywhere, which by Lemma~\ref{lem:normal} is no loss of generality for
schemes induced by genuine permutations. Define
\[
L(m) \;=\; \min_{\text{normalized schemes at horizon } 4^m}\
\max_{v \in \SA,\ v \le 16} \delta(v),
\]
the minimum over all normalized finite-fiber schemes satisfying
(A)$\wedge$(B) at horizon $4^m$ (no bound on any other value's
displacement); the maximum is over the ten values
$\SA \cap [1,16] = \{3,4\} \cup \{9, \dots, 16\}$.

\begin{proposition}[Ladder rungs; machine-checked]
$L(3) \le 1$; \ $L(4) \ge 2$ (infeasibility of the cap $\delta \le 1$ on
$\SA \cap [1,16]$ at $N=256$, verified independently by CP-SAT, 252s, and
CDCL with a unary-ladder encoding, 15s); \ $L(5) = 3$ (lower bound: CDCL,
2304s, a single run; upper bound: a window-3 witness at $N=1024$, 995s ---
which also gives $L(4) \le 3$, since $L$ is non-decreasing in $m$).
Moreover the minimal infeasible cap-coalition at $N=256$ is exactly
$\{15, 16\}$ (deletion-minimal): every scheme has $\delta(15) \ge 2$ or
$\delta(16) \ge 2$ --- the crowns of the two competing class towers
($15 = 1111_2$, apex of the $\equiv -1$ skeleton; $16 = 10000_2$, the block
top, apex of the $\equiv 0$ skeleton).
\end{proposition}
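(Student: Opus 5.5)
The plan is to reduce every clause of the proposition to one of two kinds of finite object: an explicit scheme that a short program checks directly, or a bounded SAT instance whose negative verdict is exactly equivalent to the stated bound. The one step that needs proof is that the second kind of instance really is finite and exact. $L(m)$ minimizes over normalized schemes in which every value other than the ten values of $\SA\cap[1,16]$ may have arbitrarily large displacement. So I first prove a \emph{clipping} lemma: at horizon $4^m$, every valid normalized scheme $s$ may be replaced by $s' = \min(s, m)$.

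The clipping lemma follows from Theorem~\ref{thm:chunk}. The map $\sigma \mapsto \min(\sigma, m)$ is non-decreasing, so the concatenation $T$ is unchanged as a sequence: the merged fibers inherit the induced order. Since $T$ avoids monotone $3$-APs, the ``only if'' direction of Theorem~\ref{thm:chunk} returns (A)$\wedge$(B) for $s'$. Normalization survives because $\mathrm{block}(v)/2 \le m$ for every $v \le 4^m$. No capped displacement increases. Consequently the lower-bound question becomes a finite problem: a stage in $[\mathrm{block}(v)/2, m]$ for each value (order-encoded), a linear order within fibers (equivalently, one linear order of $\SA\cap[1,4^m]$ compatible with the stages), the clauses (A) and (B), and the cap $\delta(v) \le c$ on the ten small values.

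The bounds are then certified as follows.
\begin{itemize}
\item \emph{Upper bounds.} $L(3)\le 1$ and $L(5)\le 3$ come from explicit witnesses: a normalized scheme at $N=64$ with cap $1$, and the window-$3$ scheme at $N=1024$. Each is checked directly against (A), (B) and normalization.
\item \emph{Monotonicity in $m$.} $L(m)\le L(m+1)$ holds because restricting a horizon-$4^{m+1}$ scheme to values $\le 4^m$ keeps validity (deleting values preserves AP-freeness of $T$) and leaves every $\delta$ unchanged. Applied to the $N=1024$ witness, this gives $L(4)\le 3$.
\item \emph{Lower bounds.} $L(4)\ge 2$ is the UNSAT verdict of the clipped instance at $N=256$ with cap $1$, run under two independent encodings (CP-SAT, and CDCL with a unary ladder). $L(5)\ge 3$ is the UNSAT verdict at $N=1024$ with cap $2$.
\end{itemize}

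For the coalition statement I would run the cap-$1$ instance at $N=256$ with the cap imposed only on $\{15,16\}$ and confirm that it is UNSAT. This gives the displayed dichotomy $\delta(15)\ge 2$ or $\delta(16)\ge 2$. Deletion-minimality then needs two SAT witnesses, one capping only $15$ and one capping only $16$, each checked independently. Inclusion-minimality among all subsets of the ten values follows because every proper subset of $\{15,16\}$ is feasible.

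I expect the main obstacle to be the trustworthiness of the long negative runs, above all the single $2304$\,s CDCL verdict for $L(5)\ge3$. Without DRAT certificates these remain solver verdicts, as Section~\ref{sec:data} already states. The first remedy I would try is to emit a DRAT proof for the clipped $N=256$ instance, which is small enough for \texttt{drat-trim}. For $N=1024$ I would at least cross-run a second, independently written encoding.
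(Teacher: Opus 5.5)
Your plan is correct and is essentially the paper's own justification, which consists only of these solver verdicts and witnesses; your clipping lemma $s'=\min(s,m)$ is a worthwhile addition, since it makes explicit why a bounded-stage encoding computes $L(m)$ exactly. You can also dispense with the negative runs you flag as the weak point: the argument of Remark~\ref{rem:ogchunk} applies verbatim at horizon $4^m$, so Theorem~\ref{thm:c3core} at $M=2^{2m-1}$, together with normalization ($s(b_j)\ge m$ for the bottoms of $B_{2m}$), gives $\max(\delta(15),\delta(16))\ge m-2$ for every normalized scheme when $m\ge 3$ --- hence $L(4)\ge 2$, $L(5)\ge 3$ and the $\{15,16\}$ dichotomy by hand, leaving only satisfiable instances, which explicit witnesses certify.
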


\begin{theorem}[Divergence criterion]\label{thm:divergence}
If $L(m) \to \infty$ then $\SA$ is not $3$-permutable.
\end{theorem}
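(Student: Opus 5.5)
The plan is a contrapositive: assume $\SA$ is $3$-permutable via some $\pi$, and show that $L(m)$ stays bounded along the horizons $4^m$. The first step is to pass from $\pi$ to a scheme. By Lemma~\ref{lem:normal}, $\pi$ admits a normalized valid scheme $s$ (the running-max chunking) satisfying (A)$\wedge$(B), with $\delta(v)=s(v)-\mathrm{block}(v)/2\ge 0$ for every $v$. Set
\[
D=\max_{v\in \SA,\ v\le 16}\delta(v),
\]
which is a finite number, since only ten values are involved.

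The second step is to restrict $s$ to each horizon. For each $m$, consider $\SA\cap[1,4^m]$ together with the restricted stage function and the induced fiber orders. The restricted fibers are finite, being subsets of finite fibers. Conditions (A) and (B) quantify only over AP triples and forced pairs whose three members lie in the restricted set, so they are inherited under restriction; this is the same deletion principle as horizon collapse. Normalization is pointwise and survives unchanged. One point needs checking: the ``horizon-$4^m$'' system used to define $L(m)$ must impose no constraint beyond those of (A)$\wedge$(B) on triples inside the range. By the pure-complete remark, every completion of an in-range pair either lies in the range or lands in the odd block $(4^m,2\cdot 4^m]$, which is outside $\SA$, so nothing further is imposed. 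Hence $s|_{\SA\cap[1,4^m]}$ is a feasible competitor in the minimum defining $L(m)$, and $L(m)\le D$ for every $m$.

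This contradicts $L(m)\to\infty$, which proves the theorem. I do not expect a real obstacle here. The only step needing care is the restriction step: one must confirm that the definition of ``normalized schemes at horizon $4^m$'' is exactly the restriction system, meaning stage values are not renumbered and $\delta$ is computed with the same $\mathrm{block}(v)/2$. If the horizon systems instead required, say, surjective stages, I would compress the stage values order-preservingly. Such a compression only lowers stages, so I would check that it does not destroy $\delta\ge 0$. If it did, I would simply keep the original stage labels, since the definition of $L(m)$ allows any $s$ into $\N$ with finite fibers.
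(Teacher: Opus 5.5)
Your proposal is correct and is essentially the paper's proof: you normalize the hypothetical permutation via Lemma~\ref{lem:normal} and restrict the resulting scheme to each horizon $4^m$, where completions beyond the horizon fall in the odd block $(4^m, 2\cdot 4^m]$ outside $\SA$; the ten displacements $\delta(v)$, $v \in \SA \cap [1,16]$, are then constants of the infinite scheme that bound $L(m)$ uniformly. Your closing hedge about compressing stage labels is unnecessary, because $L(m)$ is defined over arbitrary stage values in $\N$ and the restricted scheme can be used verbatim.
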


\begin{proof}
An
infinite valid permutation induces (Lemma~\ref{lem:normal}) a
\emph{normalized} valid scheme, and by restriction a normalized valid
scheme at every horizon $4^m$ (completions of values $\le 4^m$ that lie
beyond the horizon fall in the odd block $(4^m, 2\cdot 4^m]$, out of
team), and the ten displacements $\delta(v)$, $v \in \SA \cap [1,16]$,
are constants of the infinite
scheme. For $m$ with $L(m) > \max_{v \in \SA,\, v\le 16}\delta(v)$ this is a
contradiction, by the exactness of Theorem~\ref{thm:chunk}.
\end{proof}

The conjectured growth $L(m) = m-2$ matches the window-death schedule
(window $w$ dying at horizon $4^{w+3}$) and the observed per-block descent
of displacement in minimal solutions ($\delta$ profile $(m{-}2, m{-}3,
\dots, 1, 0)$ across blocks $4, 6, \dots, 2m$). The missing step is a
self-propagating squeeze $L(m{+}1) \ge L(m) + 1$; the $\times 4$
restriction alone yields only the diagonal transfer
$L_{b+1}(m{+}1) \ge L_b(m)$ for the capped set $[1, 4^b]$.

\end{document}